\newtheorem{prop}{Proposition}[section]
\newtheorem{teo}{Theorem}[section]
\newtheorem{lema}{Lemma}[section]
\newtheorem{coro}{Corollary}[section]
\newtheorem{rem}{Remark}[section]
\def\ep{\varepsilon}
\def\ul{u_\lambda}
\def\la{\lambda}
\def\ep{\varepsilon}
\def\R{{\mathbb{R}}}
\def\T{{\mathcal T}}
\def\a{\mathfrak{a}}
\def\L{{\mathcal L}}
\begin{document}
	
\title[Asymptotics in a nonlocal heat equation with absorption]{\bf Large-time behavior in a nonlocal heat equation with absorption. The absorption dominated case with fast decaying initial data}
	
\author[C. Cort\'{a}zar,  F. Quir\'{o}s \and N. Wolanski]{Carmen Cort\'{a}zar,  Fernando  Quir\'{o}s, \and Noem\'{\i} Wolanski}

\address{Carmen Cort\'{a}zar\hfill\break
\indent Academia Chilena de Ciencias\hfill\break
\indent Almirante Montt 454, Santiago de Chile, Chile.}
\email{{\tt ccortazar2357@gmail.com} }

\address{Fernando Quir\'{o}s\hfill\break
\indent Departamento  de Matem\'{a}ticas, Universidad Aut\'{o}noma de Madrid,\hfill\break\indent and Instituto de Ciencias Matem\'aticas ICMAT (CSIC-UAM-UCM-UC3M),\hfill\break
\indent 28049-Madrid, Spain.}
\email{{\tt fernando.quiros@uam.es} }

\address{Noem\'{\i} Wolanski \hfill\break
\indent IMAS-UBA-CONICET, \hfill\break\indent Ciudad Universitaria, Pab. I,\hfill\break
\indent (1428) Buenos Aires, Argentina.}
\email{{\tt wolanski@dm.uba.ar} }
	
\keywords{Nonlocal diffusion,  absorption, large-time behavior, singular and very singular solutions, population dynamics.}
	
\subjclass[2020]{35R09, 35K58, 35B40, 92D25.}

\date{}

\begin{abstract}
We study the large-time behavior of nonnegative solutions to a nonlocal dispersal equation in $\R^N$ with an absorption term modeled by $-u^p$, with $1<p<1+\frac2N$. The initial datum $u_0$ is assumed to be bounded, and to satisfy $|x|^{\frac2{p-1}}u_0(x)\to A\ge0$ as $|x|\to\infty$. Under these assumptions, we prove that the decay rate is that of the purely absorbing problem, while the limit profile is a very singular solution to a local diffusion problem with absorption if $A=0$, and a solution to this same local problem with initial datum $A|x|^{-\frac2{p-1}}$ if $A>0$.
\end{abstract}
	
\maketitle
	
\section{Introduction}
\setcounter{equation}{0}

We study the large-time behavior of nonnegative solutions to
\begin{equation}\label{eq-problem without sign}
        \begin{array}{c}
        \displaystyle u_t-\L u=-|u|^{p-1}u\quad\mbox{in }\R^N\times(0,\infty),\qquad u(x,0)=u_0(x)\quad\mbox{in }\R^N,\\
        \displaystyle\text{where }\L u(x)=\int J(x-u)\big(u(y)-u(x)\big)\,{\rm d}y,
        \end{array}
\end{equation}
with $1<p<1+\frac2N$, and $0\le u_0\in  L^\infty(\R^N)$ such that
\begin{equation}\label{eq-condition at infinity}
        |x|^{\frac2{p-1}}u_0(x)\to A\ge0\quad\mbox{as }|x|\to\infty.
\end{equation}
In particular, $u_0\in L^1(\mathbb{R}^N)\cap L^\infty(\mathbb{R}^N)$. The assumptions on the kernel $J$ are:  $J\in C^\infty(\R^N)$ radially symmetric, $J>0$ in $B_d$, $J=0$ in $\mathbb{R}^N\setminus B_d$ for some $d>0$, and $\int J(x)\,{\rm d}x=1$. By a solution of the problem we mean a function
$$
    u\in C([0,\infty);L^1(\R^N))\cap  C^1(0,\infty;L^1(\R^N))\cap  L^\infty(0,\infty;L^\infty(\R^N))
$$
that satisfies~\eqref{eq-problem without sign} in a pointwise sense. We define  a sub-solution (resp. super-solution) of the problem  in a similar way, as a function in these spaces that satisfies \eqref{eq-problem without sign} with inequalities $\le$ (resp. $\ge$) instead of equalities.
	
The operator $\L$ appears in different contexts to model nonlocal dispersal. In particular, it is used in population dynamics, where $\int J(x-y)u(y,t)\,{\rm d}y$ represents the population arriving at $x$ at time $t$ from every point in space, while $u(x,t)=\int J(x-y)u(x,t)\,{\rm d}y$ represents the population  leaving the point $x$ at time $t$. Thus, $\L u(x,t)$ is the balance between how much is arriving and how much is leaving $x$ at time $t$. Thus, in the absence of sources or sinks, $\L u=u_t$. If there is absorption, modeled by $-|u|^{p-1}u$, we get~\eqref{eq-problem without sign}; see, for instance, \cite{Fife-2003,Carrillo-Fife-2005,Bates-Zhao-2007,Alfaro-Coville-2017,Coville-2021,Brasseur-Coville-2023}. Equations of this type have also been proposed to model phase transitions \cite{Bates-Chmaj-1999a,Bates-Chmaj-1999b} and  image enhancement \cite{Gilboa-Osher-2008}.

As in the case $\L=\Delta$, the long-time behavior of nonnegative solutions is influenced by two intertwined aspects:
\begin{itemize}
    \item[\textup(i)] the competition between the diffusion and the absorption term;
    \item[\textup(ii)] the asymptotic behaviour of $u_0(x)$ as $|x|\to\infty$.
\end{itemize}

In order to see which mechanism, diffusion or absorption, induces the fastest decay, and hence dominates the evolution, we compare their characteristic decay rates. Solutions to the purely absorbing equation  $u_t=-u^p$ decay like $t^{-1/(p-1)}$. The decay of solutions to the purely diffusive equation $u_t-\L u=0$ depends on the size of the initial datum. When the datum is integrable, the decay is of order $t^{-N/2}$, since in this case solutions to this problem behave for large times like solutions to the \emph{local} heat equation
\begin{equation}\label{eq:local.heat.equation}
    u_t=\a\Delta u,\qquad \a=\frac1{2N}\int J(x)|x|^2\,\textup{d}x
\end{equation}
with the same initial datum; see~\cite{Chasseigne-Chaves-Rossi-2006}. Thus, diffusion should dominate if $\frac1{p-1}<\frac N2$. This is indeed the case: when $p>1+\frac 2N$ and the initial data are integrable (and nonnegative and bounded), then the solution $u$ to problem~\eqref{eq-problem without sign} satisfies
\[
    \lim_{t\to\infty}t^{N/2}\|u(\cdot,t)-M_*\mathcal{U}(\cdot,t)\|_{L^\infty(B_{K\sqrt t})}=0
\]
for every $K>0$, where $\mathcal{U}$ is the fundamental solution of~\eqref{eq:local.heat.equation} and
\[
    M_*=\int u_0(x)\,\textup{d}x-\int_0^\infty\int u^p(x,t)\,\textup{d}x\,\textup{d}t>0
\]
is the asymptotic (nontrivial) mass of $u$; see~\cite{PR}. Notice that, though diffusion is dominant, absorption is still felt: the limit mass, though nontrivial, is not the initial one.

The restriction of this convergence result to the so-called \emph{diffusive scale}, $x\in B_{K\sqrt{t}}$, $K>0$, is not expected to be technical, since, as in the local case (see~\cite{Herraiz-1999}), the behavior at infinity of the initial datum should have a strong influence in the long-time behavior of solutions in other scales. The restriction to such scales will appear in all the results that we describe below.

When the initial data are not integrable, the large time behavior of solutions to the heat equation may be very complex if the initial data have a complex behavior at infinity, even in the local case; see, for instance, \cite{Vazquez-Zuazua-2002}.  Hence, we restrict ourselves to initial data having a power-like decay at infinity,
\begin{equation}\label{eq-condition at infinity-other}
	|x|^\alpha u_0(x)\to A>0\quad\mbox{as }|x|\to\infty.
\end{equation}
When $\alpha \le N$, solutions to the purely diffusive nonlocal equation with such data also behave for large times as the solution to the local heat equation \eqref{eq:local.heat.equation} with the same datum. This can be proved, for instance, with scaling techniques, borrowing ideas from~\cite{terra-wol} to obtain compactness. The large-time behavior for this local problem is given (see, for instance,~\cite{Herraiz-1999}):
\begin{itemize}
    \item If $\alpha<N$, by the unique solution $\mathcal{U}_{A,\alpha}$ to problem~\eqref{eq:local.heat.equation} with initial datum $A|x|^{-\alpha}$. This function has a self-similar form,
        \[
            \mathcal{U}_{A,\alpha}(x,t)=t^{-\alpha/2}F_A(x/t^{1/2}),
        \]
        where $F_A$ is a smooth positive profile such that $\lim_{|\xi|\to\infty}|\xi|^{\alpha} F_A(\xi)=A$.
    \item If $\alpha=N$, by $C_{A,N}\log t\,\mathcal{U}(x,t)$, where $\mathcal{U}$ is the fundamental solution to problem~\eqref{eq:local.heat.equation} and $C_{A,N}$ is a constant that depends only on $A$ and~$N$, .
\end{itemize}
Therefore, diffusion is expected to dominate if $\alpha/2>1/(p-1)$. This expectation was confirmed in~\cite{Terra-Wolanski-PAMS} (see also~\cite{terra-wol}), where it was proved that if $u$ is a solution to \eqref{eq-problem without sign} with an initial datum $u_0$ satisfying~\eqref{eq-condition at infinity-other} with
\[
    \alpha>\frac2{p-1},
\]
then, for any $K>0$,
\[
    \begin{array}{ll}
        \displaystyle\lim_{t\to\infty}t^{\alpha/2}\|u(\cdot,t)-\mathcal{U}_{A,\alpha}(\cdot,t)\|_{L^\infty(B_{K\sqrt t})}=0\quad&\text{if }\alpha\in(0,N),\\
        \displaystyle\lim_{t\to\infty}\frac{t^{N/2}}{\log t}\|u(\cdot,t)-C_{A,N}\log t\,\mathcal{U}(\cdot,t)\|_{L^\infty(B_{K\sqrt t})}=0\quad&\text{if }\alpha=N.
    \end{array}
\]
Notice that in this case absorption has no effect in the large-time behaviour.

Let us take a look now to the region of parameters where we expect the decay rate of solutions to be that corresponding to absorption, $t^{-1/(p-1)}$. This region consists of two parts: $p\in(1,1+\frac2N)$ if the initial data are integrable, and $\alpha<2/(p-1)$ when they are not and have the decay behavior~\eqref{eq-condition at infinity-other}.

Though the absorption mechanism is dominant in this region of parameters, diffusion is still there, and may play a more or less important role, depending on the behavior of the initial datum $u_0$ as $|x|\to\infty$. If $u_0$ has a fast decay, we expect bigger gradients, and hence diffusion will be more significant, while if the decay is slow, solutions may become \lq\lq flat'' in diffusive regions, and then decay there as a solution to the purely absorbing problem.

To identify the expected limits, depending on the behavior of the datum at infinity, we scale the solution, taking into account that we expect to have the decay rate corresponding to absorption. Thus, for each $\lambda>0$ we define
\begin{equation}\label{eq:asorption.scaling}
    \ul(x,t)=\la^{\frac2{p-1}} u(\la x,\la^2 t).
\end{equation}
If the family $\{\ul\}$ converges uniformly in compact sets as $\lambda\to\infty$ towards some function~$U$, taking $t=1$ and renaming $\lambda^2$ as $t$, we will get the convergence of $u$ towards $U$ in the diffusive scale.  But, which function is $U$? Let us explain heuristically how to characterize~it.

The scaled function $\ul$ is a solution to
\begin{equation*}\label{eq-problem laplacian lambda}
        u_t-\L_\la u=-u^{p}\quad\mbox{in }\R^N\times(0,\infty),\qquad u(x,0)=\la^{\frac2{p-1}} u_0(\la x)\quad\mbox{in }\R^N,
\end{equation*}
for a scaled operator $\L_\la$ given by
\begin{equation}\label{eq:scaled.operator}
    \L_\la u(x)=\la^2\int J_\la(x-y)\big(u(y)-u(x)\big)\,{\rm d}y,\qquad\text{with }J_\la(z)=\la^N J(\la z).
\end{equation}
The operators $\L_\la$ converge, in a certain weak sense, to $\a \Delta$ as $\lambda\to\infty$; see, for instance,~\cite{Rossi-book} and the references therein. Therefore, the limit function $U$ is expected to be a solution to the semilinear local heat equation
\begin{equation}\label{eq:local.heat.absorption}
    U_t-\a \Delta U=-U^p\quad\mbox{in }\R^N\times(0,\infty), \qquad \a=\frac1{2N}\int J(z)|z|^2\,\textup{d}z.
\end{equation}
On the other hand, we expect the limit function $U$ to be related to the initial datum of the original problem, so that the trace of $U$ is the limit of the family $\{u_{0,\lambda}\}$,
\[
    U(x,0^+):=\lim_{t\to0^+}U(x,t)=\lim\limits_{\lambda\to\infty}u_{0,\lambda}(x)=\lim\limits_{\lambda\to\infty}\la^{\frac2{p-1}}u_0(\lambda x).
\]
If this is the case and $u_0$ decays slowly, then
\begin{equation}\label{eq:slow.decay}
    U(x,0^+)=\lim\limits_{\lambda\to\infty}\la^{\frac2{p-1}}u_0(\lambda  x)=|x|^{-\frac2{p-1}}\lim_{|\lambda x|\to\infty}|\lambda x|^{\frac2{p-1}}u_0(\lambda x)=\infty\quad \textup{if }x\neq0.
\end{equation}
This is compatible with $U$ begin the \emph{flat} solution  $\mathcal{F}(t)=\big((p-1)t\big)^{-\frac1{p-1}}$ to \eqref{eq:local.heat.absorption}, and, indeed, if $u_0$ satisfies~\eqref{eq:slow.decay}, then, for any $K>0$,
\[
    \lim_{t\to\infty}t^{\frac1{p-1}}\|u(\cdot,t)-\mathcal{F}(t)\|_{L^\infty(B_{K\sqrt t})}=0;
\]
see~\cite{STW}. Notice that for initial data having the power-like behavior at infinity~\eqref{eq-condition at infinity-other}, condition~\eqref{eq:slow.decay} is equivalent to
\[
    \alpha<\frac2{p-1}.
\]
This covers the whole absorption dominated region of parameters when the initial data are not integrable.

There is a situation in the absorption dominated region that has not been treated yet,  namely, the case $p\in(1,1+\frac2N)$ with integrable initial datum such that $|x|^{\frac2{p-1}} u_0(x)\to A\ge0$ as $|x|\to\infty$. For $u_0$ satisfying~\eqref{eq-condition at infinity-other}, this would correspond to $\alpha\ge\frac2{p-1}>N$. The aim of this paper is to fill in this gap.

When the limit value $A$ is 0, then $\lim\limits_{\lambda\to\infty}\la^{\frac2{p-1}}u_0(\la x)=0$ for all $x\neq0$, while
$$
    \lim\limits_{\lambda\to\infty}\int \la^{\frac2{p-1}}u_0(\la x)\,\textup{d}x=\la^{\frac2{p-1}-N}\int u_0(\xi)\,\textup{d}\xi=+\infty,
$$
since $p<1+\frac2N$. Thus, the expected limit  $U$ should satisfy
\begin{equation}\label{eq:conditions.VSS}
    U(x,t)\to0 \quad \textup{as }t\to0^+\textup{ for all } x\neq0,\quad \int U(x,t)\,{\rm d}x\to\infty \quad \textup{as }t\to0^+.
\end{equation}
There is a unique nonnegative solution $V$ to the semilinear local heat equation~\eqref{eq:local.heat.absorption} with this very singular behavior as $t\to0^+$. This nonnegative \emph{very singular solution} to~\eqref{eq:local.heat.absorption} should then give the large-time behavior in this case.   We confirm this expectation in Theorem~\ref{teo-main}, where we prove that for every $K>0$
\[
    t^{\frac1{p-1}}\|u(\cdot,t)-V(\cdot,t)\|_{L^\infty(B_{K\sqrt t})}\to0\quad\mbox{as }t\to\infty.
\]

When $A>0$, the initial datum of the expected limit is $U(x,0)=A|x|^{-\frac2{p-1}}$, and we will prove in Theorem~\ref{teo-main2} that, for every $K>0$,
\begin{equation}\label{eq:convergence.critical.decay.initial.datum}
    t^{\frac1{p-1}}\|u(\cdot,t)-U_A(\cdot,t)\|_{L^\infty(B_{K\sqrt t})}\to0\quad\mbox{as }t\to\infty,
\end{equation}
where $U_A$ is the unique solution to the semilinear local heat equation~\eqref{eq:local.heat.absorption} such that
\begin{equation}\label{eq:initial.datum.profile.critical.decay}
    U(x,t)\to A|x|^{-\frac2{p-1}}\textup{ for all } x\neq0\quad \mbox{as } t\to0^+.
\end{equation}

Our proofs are based on the scaling argument mentioned above. The most important technical  difficulties appear,   first, when trying to prove that the family  $\{\ul\}$ is uniformly bounded in $t\ge t_0>0$, and then, once this is established,  showing that the family $\{\ul\}$ is precompact on compact sets.   The second difficulty stems mainly from: (i)  the fact that the functions $\ul$ are solutions of equations involving different operators; and (ii) the lack of a regularizing effect of the diffusion operators themselves. This difficulty was overcome in \cite{terra-wol} with arguments that can be used in our present situation.

In comparison with the available large-time asymptotic results, the main difference is that in this case the initial condition is not attained in the sense of finite measures, so that the characterization of the limit solution needs new and different ideas.

\noindent\emph{Borderline cases.}  If $u_0$ satisfies~\eqref{eq-condition at infinity-other} with
\[
    \alpha=\frac{2}{p-1}< N,
\]
both decay mechanisms, diffusion and absorption, are  balanced. But we may still proceed as in the case $\alpha=\frac{2}{p-1}>N$ considered in the present paper, performing the scaling~\eqref{eq:asorption.scaling} of the solution to~\eqref{eq-problem without sign} to obtain a similar result. Indeed, the large-time behavior is given by~\eqref{eq:convergence.critical.decay.initial.datum} for any $K>0$, with $1/(p-1)=\alpha/2$ and $U_A$ the unique solution to the semilinear local heat equation~\eqref{eq:local.heat.absorption} satisfying~\eqref{eq:initial.datum.profile.critical.decay};  see~\cite{terra-wol}. The difference with respect to the case in which $\alpha=\frac{2}{p-1}>N$ is that now the absorption decay rate coincides with the diffusion one.  

When the initial datum is integrable, $u_0\in L^1(\mathbb{R}^N)$, the borderline balanced case corresponds to $p=1+\frac2N$. However, the decay rate is not the common one for diffusion and absorption, $t^{-\frac N2}$, but faster,
\[
    \lim_{t\to\infty}t^{N/2}u(\cdot ,t)\to0\quad\mbox{uniformly on compact sets},
\]
due to the combined effects of both mechanisms; see~\cite{terra-wol}. Hence, the decay rate needs some correction, as compared to the diffusion/absorption one, $t^{-\frac N2}$.

 There is also a doubly critical case, namely $p=1+\frac2N$ with initial data satisfying~\eqref{eq-condition at infinity-other} with $\alpha=N$, so that $\alpha=\frac2{p-1}$. Notice that for these borderline values of the parameters diffusion and absorption are not balanced, the decay due to the latter being faster ($t^{-\frac1{p-1}}$ against $t^{-\frac N2}\log t$). In view of the results for the local case $\mathcal{L}=\Delta$ (see~\cite{Herraiz-1999}), we expect the same behavior as for the cases $\alpha=\frac2{p-1}\neq N$ mentioned above. However, the proofs might be more involved, due to the doubly critical character of the problem.

These two remaining cases are expected to require new ideas, and will be considered elsewhere.  

\subsection*{Organization of the paper}
In Section~2 we first prove existence and uniqueness of a local solution  when $u_0\in L^1(\mathbb{R}^N)\cap  L^\infty(\mathbb{R}^N)$, allowing for sign changes. Next, we obtain a comparison result that implies, in particular, that if the initial datum is nonnegative, then the solution is nonnegative and exists globally. In Section~3 we obtain space and time decay estimates for the solution that are scaling invariant. Section~4 is devoted to the proof of our main result. After choosing the appropriate scaling $u_\lambda$ for the absorption dominated cases, we use the time estimate from the previous section to show that the family $\{\ul\}$ is precompact in compact subsets of~$t>0$, so that it converges under subsequences to a function~$U$. Then, thanks to the space estimate, we identify the limit function~$U$. It is here where the main technical difficulties of the paper arise. Finally, we translate the convergence as $\lambda\to\infty$ in our large-time behavior results for solutions to~\eqref{eq-problem without sign}.

\section{Existence  and comparison }
\setcounter{equation}{0}

 The goal of this section is to prove that problem \eqref{eq-problem without sign} has a unique global in time nonnegative  solution if the initial datum is nonnegative and belongs to $L^1(\mathbb{R}^N)\cap  L^\infty(\mathbb{R}^N)$.  
We begin by proving  existence and uniqueness of a \emph{local in time solution} for such initial data, that is a function $u\in C([0,T];L^1(\R^N))\cap  C^1(0,T;L^1(\R^N))\cap  L^\infty(0,T;L^\infty(\R^N))$ for some $T>0$   that satisfies
\begin{equation}\label{eq:local.solution}
    \displaystyle u_t-\L u=-|u|^{p-1}u\quad\mbox{in }\R^N\times(0,T),\qquad u(x,0)=u_0(x)\quad\mbox{in }\R^N.
\end{equation}
Sign changes are allowed.   Then, we prove a comparison principle for sub- and super-solutions  (in their common existence time)   that implies, in particular, that if $u_0\ge0$ then $u\ge0$. This in turn implies, if $u_0\ge0$, that $u\le u_\L$ for $t\in[0,T]$,  where $u_\L$ stands for the solution to
\begin{equation}\label{eq-homogeneous}
	u_t-\L u=0\quad\mbox{in }\R^N\times(0,\infty),\qquad u(x,0)=u_0(x)\quad\mbox{in }\R^N.
\end{equation}	
Hence, for every $t\in[0,T]$,
\begin{equation}\label{eq:L1.Linfty.estimates}
    \|u(\cdot,t)\|_{L^1(\R^N)}\le \|u_0\|_{L^1(\R^N)},\qquad \|u(\cdot,t)\|_{L^\infty(\R^N)}\le \|u_0\|_{L^\infty(\R^N)}.
\end{equation}
Since the time step in the construction of the solution depends only on $\|u_0\|_{L^\infty(\R^N)}$, we  will conclude that the solution exists globally.
	
We start with the proof of local existence, that uses a fixed point argument.
\begin{teo}\label{teo-existence}
    Let $p>1$ and $u_0\in L^1(\R^N)\cap L^\infty(\R^N)$. There exists $T>0$ and a unique function $u\in C([0,T];L^1(\R^N))\cap  C^1(0,T;L^1(\R^N))\cap  L^\infty(0,T;L^\infty(\R^N))$ that satisfies~\eqref{eq:local.solution}.
\end{teo}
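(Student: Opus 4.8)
The plan is to prove local existence and uniqueness via the Banach fixed point theorem applied to the integral (Duhamel-type) formulation of the problem. First I would rewrite~\eqref{eq:local.solution} in mild form. Writing $\L u = J*u - u$, the linear part contributes a zeroth-order term, so the equation reads $u_t + u = J*u - |u|^{p-1}u$. Treating the right-hand side as a forcing term and integrating the linear ODE $u_t + u = g$ along characteristics (in the trivial sense, since the $-u$ is just a multiplier), I obtain the fixed point formulation
\begin{equation*}
    u(x,t)=e^{-t}u_0(x)+\int_0^t e^{-(t-s)}\Big(J*u(\cdot,s)-|u(\cdot,s)|^{p-1}u(\cdot,s)\Big)(x)\,{\rm d}s=:\big(\T u\big)(x,t).
\end{equation*}

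Next I would set up the functional framework. Fix $R>0$ and $T>0$ to be chosen, and let $X_{R,T}$ be the closed ball of radius $R$ in the Banach space $C([0,T];L^1(\R^N))\cap L^\infty(0,T;L^\infty(\R^N))$, equipped with the norm $\|u\|:=\sup_{[0,T]}\big(\|u(\cdot,t)\|_{L^1}+\|u(\cdot,t)\|_{L^\infty}\big)$. I would choose $R:=2\big(\|u_0\|_{L^1}+\|u_0\|_{L^\infty}\big)$. The key estimates are: convolution with $J$ is bounded on both $L^1$ and $L^\infty$ (by Young's inequality, since $\int J=1$), so this term is globally Lipschitz; and the absorption nonlinearity $z\mapsto|z|^{p-1}z$, while not globally Lipschitz for $p>1$, is locally Lipschitz with a Lipschitz constant on the ball of radius $R$ controlled by $C\,p\,R^{p-1}$ in $L^\infty$, and it maps the ball into $L^1$ since $\big\||u|^{p-1}u\big\|_{L^1}\le\|u\|_{L^\infty}^{p-1}\|u\|_{L^1}$. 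I would then show that for $T$ small enough (depending only on $R$, hence only on $\|u_0\|_{L^1}$ and $\|u_0\|_{L^\infty}$) the map $\T$ sends $X_{R,T}$ into itself and is a contraction there; the factor $T$ multiplying all the nonlinear contributions from the time integral is what makes both properties hold. This yields a unique fixed point $u\in X_{R,T}$.

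Finally I would upgrade the regularity of the fixed point. From the mild formulation and the continuity of the integrand in $s$, a bootstrap argument shows $u\in C([0,T];L^1)$; then, since the right-hand side $J*u-|u|^{p-1}u$ is continuous in $t$ with values in $L^1$ (using that $u$ is bounded and that multiplication by a bounded function together with convolution are continuous operations), differentiating the integral equation in $t$ gives $u\in C^1((0,T);L^1)$ and recovers the pointwise equation~\eqref{eq:local.solution}, together with the initial condition $u(\cdot,0)=u_0$ from the $e^{-t}u_0$ term.

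I expect the main obstacle to be handling the superlinear absorption term, which is only locally Lipschitz: one must carefully confine the iteration to a fixed ball $X_{R,T}$ and verify that the self-mapping property survives, exploiting that the critical time-step length $T$ can be taken to depend solely on $\|u_0\|_{L^\infty}$ (through $R$). This uniformity is exactly what the subsequent discussion needs in order to iterate the construction and conclude \emph{global} existence once nonnegativity and the $L^\infty$ bound~\eqref{eq:L1.Linfty.estimates} are available.
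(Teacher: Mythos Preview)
Your proposal is correct and matches the paper's proof almost exactly: the same mild formulation $\T u(x,t)=e^{-t}u_0(x)+\int_0^t e^{-(t-s)}\big(J*u-|u|^{p-1}u\big)\,{\rm d}s$, the same Banach fixed point argument on a ball, and the same regularity bootstrap. One small point: with your combined ball $R=2(\|u_0\|_{L^1}+\|u_0\|_{L^\infty})$ the time step would a priori depend on both norms; the paper keeps separate balls $\|v\|_{L^1}\le 2\|u_0\|_{L^1}$ and $\|v\|_{L^\infty}\le 2\|u_0\|_{L^\infty}$, so that the self-mapping and contraction constants involve only $\|u_0\|_{L^\infty}$, which is exactly what you flag as needed for the global continuation argument.
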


\begin{proof}
Let
\begin{align*}
    K=\{v&\in L^\infty(0,T;L^1(\R^N))\cap  L^\infty(0,T;L^\infty(\R^N))\mbox{ such that }\|v(\cdot,t)\|_{L^1(\R^N)}\le 2\|u_0\|_{L^1(\R^N)}\\
    &\mbox{ and }\|v(\cdot,t)\|_{L^\infty(\R^N)}\le 2\|u_0\|_{L^\infty(\R^N)}\mbox{ for every }t\in (0,T)\},
\end{align*}
and for each $v\in K$ let
\begin{align*}
    \T v(x,t)&=e^{-t}u_0(x)+\int_0^t\int e^{-(t-s)}J(x-y)v(y,s)\,{\rm d}y{\rm d}s\\
    &\quad-\int_0^te^{-(t-s)}|v(x,s)|^{p-1}v(x,s)\,{\rm d}s.
\end{align*}

It is easy to check that if $u$ is a fixed point of $\T$ in $K$, then $u\in C([0,T];L^1(\R^N))$, and once this is proved, it is immediate that $u\in C^1(0,T;L^1(\R^N))$ and satisfies~\eqref{eq:local.solution}. Therefore, $u$ is a solution to~\eqref{eq:local.solution} if and only if it is a fixed point of $\T$ in $K$. Hence, let us prove that $\T$ has a unique fixed point in $K$ if we take $T$ small enough. We will do this by means of Banach's fixed-point theorem.  
	
Let us see  first that $\T:K\to K$ if $T$ is small, how small depending only on  $\|u_0\|_{L^\infty(\R^N)}$. Indeed, on the one hand,
\begin{align*}
    \|\T v(\cdot,t)\|_{L^1(\R^N)}&\le e^{-t}\|u_0\|_{L^1(\R^N)}+\int_0^t e^{-(t-s)}\|v(\cdot,s)\|_{L^1(\R^N)}\,{\rm d}s\\
	&\hskip1cm+\int_0^te^{-(t-s)}\|v(\cdot,s)\|_{L^\infty(\R^N)}^{p-1}\|v(\cdot,s)\|_{L^1(\R^N)}\,{\rm d}s\\
	&\le (1+2t+2^p\|u_0\|_{L^\infty(\R^N)}^{p-1}t)\|u_0\|_{L^1(\R^N)}\le 2\|u_0\|_{L^1(\R^N)}
\end{align*}
if $t\le T$, for some $T$ depending only on  $\|u_0\|_{L^\infty(\R^N)}$. Analogously, with the same $T$, for $t\in (0,T)$ we have
\[
    \|\T v(\cdot,t)\|_{L^\infty(\R^N)}\le (1+2t+2^p\|u_0\|_{L^\infty(\R^N)}^{p-1}t)\|u_0\|_{L^\infty(\R^N)}\le 2\|u_0\|_{L^\infty(\R^N)}.
\]

In order to see that $\T$ it is a contraction in $K$ with the norm
\[
    \vvvert v \vvvert=\max\{\|v\|_{L^\infty(0,T;L^1(\R^N))},\|v\|_{L^\infty(0,T;L^\infty(\R^N))}\},
\]
we observe that, if $v_1$ and $v_2$ belong to $K$, for every $0<s<t\le T$,
\[
    \big||v_1(x,s)|^{p-1}v_1(x,s)-|v_2(x,s)|^{p-1}v_2(x,s)\big|\le p(2\|u_0\|_{L^\infty(\R^N)})^{p-1}|v_1(x,s)-v_2(x,s)|,
\]
so that
\begin{align*}
    \|\T v_1-\T v_2\|_{L^\infty(0,T;L^1(\R^N))}&\le (1+p(2\|u_0\|_{L^\infty(\R^N)})^{p-1})T\|v_1-v_2\|_{L^\infty(0,T;L^1(\R^N))}\\
    &\le \frac12\|v_1-v_2\|_{L^\infty(0,T;L^1(\R^N))}
\end{align*}
if $T$ is small enough, how small depending only on  $\|u_0\|_{L^\infty(\R^N)}$. Analogously, for this same~$T$,
\begin{align*}
    \|\T v_1-\T v_2\|_{L^\infty(0,T;L^\infty(\R^N))}&\le (1+p(2\|u_0\|_{L^\infty(\R^N)})^{p-1})T\|v_1-v_2\|_{L^\infty(0,T;L^\infty(\R^N))}\\
    &\le\frac12\|v_1-v_2\|_{L^\infty(0,T;L^\infty(\R^N))}.
\end{align*}
Hence, $\T$ is a contraction in $K$ if $T$ is small, how small depending only on $\|u_0\|_{L^\infty(\R^N)}$. Therefore, $\T$ has a  unique fixed point  $u\in K$, which is the unique solution to~\eqref{eq-problem without sign} in~$\R^N\times(0,T)$.  
\end{proof}
	
Now, we prove a comparison result.		
\begin{prop}\label{prop-comparison}
    Let $u_1$ be a subsolution to~\eqref{eq-problem without sign} and $u_2$ a supersolution to~\eqref{eq-problem without sign} in $\R^N\times[0,T]$	
    such that $u_1(x,0)\le u_2(x,0)$ in $\R^N$. Then, $u_1(x,t)\le u_2(x,t)$ in $\R^N\times[0,T]$.
\end{prop}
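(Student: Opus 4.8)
The plan is to reduce the comparison to a single differential inequality for the difference $w=u_1-u_2$ and then run a Kato-type argument adapted to the nonlocal operator $\L$, exploiting that the absorption nonlinearity is monotone. Writing $f(s)=|s|^{p-1}s$ and subtracting the supersolution inequality from the subsolution one gives, pointwise in $\R^N\times(0,T]$,
\[
\partial_t w-\L w\le -\big(f(u_1)-f(u_2)\big).
\]
Since $p>1$, $f\in C^1(\R)$ with $f'(s)=p|s|^{p-1}\ge0$, so by the mean value theorem $f(u_1)-f(u_2)=c(x,t)\,w$ with $0\le c(x,t)\le p\big(\max(\|u_1\|_{L^\infty},\|u_2\|_{L^\infty})\big)^{p-1}$. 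Thus $\partial_t w-\L w\le -c\,w$ with $c\ge0$ bounded, and the whole point is that once $w$ is replaced by its positive part $w^+=\max(w,0)$, the absorption term will carry a favorable sign and can simply be discarded.

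The key structural fact is a nonlocal Kato inequality: for $w\in L^1(\R^N)$,
\[
\chi_{\{w>0\}}(x)\,\L w(x)\le \L w^+(x)\qquad\text{for a.e.\ }x\in\R^N .
\]
This follows by writing $\L w=J*w-w$ and checking the two cases $w(x)>0$ and $w(x)\le0$ directly, the difference $\L w^+(x)-\chi_{\{w>0\}}(x)\,\L w(x)$ equalling a nonnegative convolution integral in each case ($\int J(x-y)w^-(y)\,{\rm d}y$ and $\int J(x-y)w^+(y)\,{\rm d}y$, respectively). Multiplying the differential inequality by $\chi_{\{w>0\}}$ and using the chain rule $\partial_t w^+=\chi_{\{w>0\}}\,\partial_t w$ together with this inequality and $c\,w^+\ge0$ yields $\partial_t w^+\le \L w^+$. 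Integrating in $x$ and using that $\int_{\R^N}\L\phi\,{\rm d}x=0$ for every $\phi\in L^1(\R^N)$ (because $\int J=1$ and $\L\phi=J*\phi-\phi$, so the two pieces cancel by Fubini) gives
\[
\frac{d}{dt}\int_{\R^N} w^+(x,t)\,{\rm d}x\le 0 .
\]
Since $w^+(\cdot,0)=(u_1(\cdot,0)-u_2(\cdot,0))^+=0$, we conclude $\int w^+(x,t)\,{\rm d}x\le0$, hence $w^+\equiv0$, that is $u_1\le u_2$ on $\R^N\times[0,T]$.

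The routine parts are the two case checks for the Kato inequality and the Fubini computation $\int\L\phi=0$. The main obstacle is making the manipulation with $w^+$ rigorous in the functional setting $w\in C([0,T];L^1(\R^N))\cap C^1((0,T);L^1(\R^N))$: neither is $s\mapsto s^+$ differentiable, nor is pointwise differentiation of $\int w^+$ in $t$ immediate. I would handle this by approximating $s^+$ with smooth nondecreasing convex functions $j_\ep$ satisfying $0\le j_\ep'\le 1$ and $j_\ep'\to\chi_{\{s>0\}}$, computing $\frac{d}{dt}\int j_\ep(w)\,{\rm d}x=\int j_\ep'(w)\,\partial_t w\,{\rm d}x$ (legitimate since $\partial_t w\in C((0,T);L^1(\R^N))$), using the nonlocal Jensen inequality $j_\ep'(w)\,\L w\le \L\big(j_\ep(w)\big)$ (a direct consequence of convexity) and $\int\L(j_\ep(w))\,{\rm d}x=0$, and finally letting $\ep\to0$ by dominated convergence to recover $\frac{d}{dt}\int w^+\,{\rm d}x\le0$. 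The boundedness of $c$ together with the $L^\infty$ and $L^1$ bounds on $u_1,u_2$ ensures all integrands are dominated by fixed $L^1$ functions throughout this limiting process.
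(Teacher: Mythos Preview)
Your proof is correct and follows essentially the same Kato-type strategy as the paper: multiply the differential inequality for $w=u_1-u_2$ by $\chi_{\{w>0\}}$, discard the monotone absorption term, and conclude $\tfrac{d}{dt}\int w^+\,{\rm d}x\le 0$. The only cosmetic differences are that the paper treats the nonlocal term via the symmetrization identity $\int\operatorname{sgn}(w(x))\,\L w(x)\,{\rm d}x=-\tfrac12\int\!\!\int J(x-y)(w(y)-w(x))(\operatorname{sgn}(w(y))-\operatorname{sgn}(w(x)))\,{\rm d}y\,{\rm d}x\le 0$ rather than your pointwise Kato inequality combined with $\int\L w^+\,{\rm d}x=0$, and it does not spell out the $j_\ep$-regularization step that you (rightly) include for rigor.
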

\begin{proof}
Let $w=u_1-u_2$. Then,
\[
    w_t(x,t)\le\L w(x,t)-(|u_1(x,t)|^{p-1}u_1(x,t)-|u_2(x,t)|^{p-1}u_2(x,t)).
\]
We multiply this inequality  by $\operatorname{sgn}(w)$ (where $\operatorname{sgn}(s)=1$ if $s>0$ and $\operatorname{sgn}(s)=0$ if $s\le0$) and integrate in $\R^N$. Hence, as $w_t \operatorname{sgn}(w)=(w^+)_t$, with $w^+=\max\{w,0\}$,
\begin{align*}
    \int_{\R^N} (w^+)_t&(x,t)\,{\rm d}x\le\int_{\R^N}\int_{\R^N}J(x-y)\big(w(y,t)-w(x,t)\big)\operatorname{sgn}(w(x,t))\,{\rm d}y{\rm d}x\\
    &\hskip.5cm-\int_{\R^N}(|u_1(x,t)|^{p-1}u_1(x,t)-|u_2(x,t)|^{p-1}u_2(x,t))\operatorname{sgn}(u_1(x,t)-u_2(x,t))\,{\rm d}x\\
    &\le\int_{\R^N}\int_{\R^N}J(x-y)\big(w(y,t)-w(x,t)\big)\operatorname{sgn} (w(x,t))\,{\rm d}y{\rm d}x\\
    &=-\frac12\int_{\R^N}\int_{\R^N}J(x-y)\big(w(y,t)-w(x,t)\big)\big(\operatorname{sgn}(w(y,t))-\operatorname{sgn}(w(x,t)\big)\,{\rm d}y{\rm d}x\\
    &\le 0.
\end{align*}
Integrating between $0$ and $t$, $t\in (0,T]$, we deduce that $\displaystyle\int_{\R^N}w^+(x,t)\,{\rm d}x=0$. Hence, $w(\cdot,t)\le0$ for all $t\in(0,T]$, that is, $u_1\le u_2$ in $\R^N\times(0,T]$.
\end{proof}
As a corollary we get positivity if $u_0\ge0$, since $v\equiv0$ is a solution to the equation.
\begin{coro}\label{coro-nonnegative}
    Let $u$ be the solution constructed in Theorem \ref{teo-existence}. Let us assume that $u_0\ge0$. Then, $u\ge0$ as well.
\end{coro}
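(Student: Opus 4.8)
The plan is to deduce the conclusion directly from the comparison principle of Proposition~\ref{prop-comparison}, using the trivial solution as a barrier from below. First I would observe that the constant function $v\equiv0$ belongs to the relevant function spaces and is a (classical) solution to~\eqref{eq-problem without sign} on $\R^N\times[0,T]$: indeed $v_t\equiv0$, $\L v\equiv0$ since $\int J(x-y)\big(v(y)-v(x)\big)\,{\rm d}y=0$, and $-|v|^{p-1}v\equiv0$, so the equation holds with equality. In particular, $v$ is a subsolution to the problem.

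Next I would note that the solution $u$ produced in Theorem~\ref{teo-existence} satisfies~\eqref{eq:local.solution} with equality on its existence interval $[0,T]$, so it is in particular a supersolution. The ordering of the initial data is immediate from the hypothesis: $v(x,0)=0\le u_0(x)=u(x,0)$ for all $x\in\R^N$, precisely because $u_0\ge0$.

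Applying Proposition~\ref{prop-comparison} with $u_1=v$ (subsolution) and $u_2=u$ (supersolution), whose initial data satisfy $u_1(\cdot,0)\le u_2(\cdot,0)$, yields $u_1\le u_2$ on $\R^N\times[0,T]$, that is, $0\le u$ throughout. Since there is genuinely no analytical difficulty here, the only point requiring care is the bookkeeping one of checking that $v\equiv0$ and $u$ do meet the definitions of sub- and super-solution entering Proposition~\ref{prop-comparison} (membership in the stated spaces and validity of the differential inequalities), which is routine given that $v$ is smooth and constant and $u$ solves the equation exactly. Thus the comparison principle applies verbatim and the nonnegativity of $u$ follows at once.
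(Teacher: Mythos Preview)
Your proof is correct and follows exactly the approach the paper takes: apply Proposition~\ref{prop-comparison} with the trivial solution $v\equiv0$ as subsolution and $u$ as supersolution. The paper's own proof is the one-line remark that $v\equiv0$ is a solution to the equation, so nonnegativity follows from comparison.
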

Thanks to positivity, $u$ is a sub-solution to \eqref{eq-homogeneous}. Therefore, $u\le u_\L$ and, in particular, the estimates~\eqref{eq:L1.Linfty.estimates} hold for all $t\in[0,T]$.

As the existence time $T$ in Theorem \ref{teo-existence} depends only on  the $L^\infty$ norm of the initial datum, we can extend the solution step by step and get that the solution exists globally.
\begin{coro}\label{coro-global existence}
    Let $0\le u_0\in L^1(\R^N)\cap L^\infty(\R^N)$. Then, there exists a unique maximal  solution to \eqref{eq-problem without sign}. This solution is global.
\end{coro}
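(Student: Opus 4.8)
The plan is to combine the uniform-step local existence of Theorem~\ref{teo-existence} with the a priori bounds coming from positivity and comparison, and then iterate in time. The crucial point, already emphasized in the remarks following Corollary~\ref{coro-nonnegative}, is that the existence time $T$ in Theorem~\ref{teo-existence} depends on the datum only through $\|u_0\|_{L^\infty(\R^N)}$, and that along any nonnegative solution this quantity does not grow. Indeed, by Corollary~\ref{coro-nonnegative} the solution stays nonnegative, hence it is a subsolution of the purely diffusive problem~\eqref{eq-homogeneous}, so that Proposition~\ref{prop-comparison} yields $u\le u_\L$ and therefore the estimates~\eqref{eq:L1.Linfty.estimates} hold on the whole existence interval. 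This uniform-in-time $L^\infty$ control is what prevents the time step from degenerating.

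First I would set up the continuation. Theorem~\ref{teo-existence} gives a solution $u$ on $[0,T]$ with $T=T(\|u_0\|_{L^\infty(\R^N)})$, and Corollary~\ref{coro-nonnegative} gives $u\ge0$ there. Then $u(\cdot,T)\in L^1(\R^N)\cap L^\infty(\R^N)$ is again nonnegative, with $\|u(\cdot,T)\|_{L^\infty(\R^N)}\le\|u_0\|_{L^\infty(\R^N)}$ by~\eqref{eq:L1.Linfty.estimates}. Applying Theorem~\ref{teo-existence} with initial datum $u(\cdot,T)$ produces a solution on $[T,2T]$ with the \emph{same} step $T$, precisely because that step was chosen to depend only on an upper bound for the $L^\infty$ norm of the datum. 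Iterating, at the $n$-th stage I obtain a nonnegative solution on $[0,nT]$ satisfying~\eqref{eq:L1.Linfty.estimates}, and as $n\to\infty$ these intervals exhaust $[0,\infty)$.

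Next I would check that the function obtained by patching these pieces is a genuine solution in the required class on all of $(0,\infty)$, the only delicate point being regularity across the junction times $nT$. Continuity in $L^1(\R^N)$ there is automatic, since consecutive pieces share the value $u(\cdot,nT)$. For the time derivative, note that $u$ satisfies $u_t=\L u-|u|^{p-1}u$ on each open subinterval, and the right-hand side is continuous in $L^1(\R^N)$: the map $t\mapsto\L u(\cdot,t)$ is continuous because $\L=J*(\cdot)-(\cdot)$ is a bounded operator on $L^1(\R^N)$ and $u\in C([0,\infty);L^1(\R^N))$, while $t\mapsto|u|^{p-1}u$ is continuous in $L^1(\R^N)$ thanks to the uniform $L^\infty$ bound, which makes $s\mapsto|s|^{p-1}s$ Lipschitz on the relevant range. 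Hence the one-sided time derivatives at each $nT$ coincide, $u\in C^1(0,\infty;L^1(\R^N))$, and $u$ solves~\eqref{eq-problem without sign} globally.

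Finally, uniqueness and maximality follow from the local uniqueness in Theorem~\ref{teo-existence} by a standard connectedness argument: if $u$ and $\tilde u$ are two solutions, the set of times at which they agree is relatively closed in their common interval of existence by continuity, and relatively open by applying local uniqueness at each such time (restarting from the common value), so the two solutions coincide. The solution constructed above is thus the unique one and is defined for all $t>0$, which makes it the maximal solution and shows it is global. I expect no serious obstacle in this corollary: the entire argument reduces to the non-degeneration of the time step, and that is guaranteed exactly by positivity plus comparison with~\eqref{eq-homogeneous}.
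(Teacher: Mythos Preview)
Your argument is correct and follows exactly the paper's approach: the paragraph preceding the corollary already records that~\eqref{eq:L1.Linfty.estimates} holds thanks to positivity and comparison with~\eqref{eq-homogeneous}, and that since the step $T$ in Theorem~\ref{teo-existence} depends only on $\|u_0\|_{L^\infty(\R^N)}$ one can extend step by step. You simply spell out the gluing (regularity across the junction times $nT$) and the uniqueness/maximality argument in more detail than the paper does, but nothing in your route differs in substance.
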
 	

\begin{rem}\label{rem-comparison}
\textup{(i)}
The comparison principle also holds if $u_1$ and $u_2$ are respectively a sub and a super-solution to the nonlocal equation in~\eqref{eq-problem without sign} in $|x|>a>0$, $0<t<T$, $u_1(x,0)\le u_2(x,0)$ in $|x|>a>0$ and, in addition, $u_1(x,t)\le u_2(x,t)$ if $|x|\le a$, $0<t<T$.
 		
To prove this we proceed as in the proof of Proposition~\ref{prop-comparison}. This time, the equation holds in $|x|>a$ and not in $\R^N$. But, as $\operatorname{sgn}(w(x,t))=0$ if $|x|<a$, we have that
\begin{align*}
    \int_{{|x|>a}}(w^+)_t(x,t)\,{\rm d}x&\le\int_{{|x|>a}}\int_{\R^N}J(x-y)\big(w(y,t)-w(x,t)\big)\operatorname{sgn}(w(x,t))\,{\rm d}y{\rm d}x\\
	&=\int_{\R^N}\int_{\R^N}J(x-y)\big(w(y,t)-w(x,t)\big)\operatorname{sgn}(w(x,t))\,{\rm d}y{\rm d}x.
\end{align*}
From here on, the proof follows as in Proposition~\ref{prop-comparison}.	
 		
\noindent\textup{(ii)} An analogous result holds in $B_a\times(0,T)$ if $u_1$ and $u_2$ are respectively a sub and a super-solution in $B_a\times(0,T)$, $u_1(x,0)\le u_2(x,0)$ in $B_a$ and, in addition,  $u_1(x,t)\le u_2(x,t)$ if $a\le|x|\le a+d$, $0<t<T$.
\end{rem}

As a corollary of the proof of Proposition \ref{prop-comparison} and, with the idea just stated in the remark, we have the following result that will be used at the end of the paper.
\begin{prop}\label{prop-comparison 2}
    Let $\Omega\subset\mathbb{R}^N$ bounded. Let $Z\in C([0,T];L^1(\R^N))\cap C^1(0,T;L^1(\R^N))\cap L^\infty(0,T;L^\infty(\R^N))$ be such that
    \begin{equation*}\label{eq-comparison 2}
 		\begin{cases}
 			Z_t\le \L Z-F&\textup{in }\Omega\times(0,T),\\
 			Z\le 0&\textup{in }(B_d(\Omega)\setminus\Omega)\times(0,T),\\
 			Z\le 0&\textup{in }\Omega\times\{0\}.
		\end{cases}
    \end{equation*}
    with $F(x,t)\operatorname{sgn}(Z(x,t))\ge0$. Then, $Z\le 0$ in $\Omega\times(0,T)$.
\end{prop}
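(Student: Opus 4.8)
The plan is to run the integration-against-$\operatorname{sgn}(Z)$ argument of Proposition~\ref{prop-comparison}, localized to $\Omega$ exactly in the spirit of Remark~\ref{rem-comparison}. First I would multiply the differential inequality $Z_t\le\L Z-F$ by $\operatorname{sgn}(Z)$ and integrate over $\Omega$. Using the identity $Z_t\operatorname{sgn}(Z)=(Z^+)_t$ (justified as in Proposition~\ref{prop-comparison}) together with the hypothesis $F\operatorname{sgn}(Z)\ge0$, which kills the absorption-type term, this yields
\[
    \int_\Omega(Z^+)_t(x,t)\,{\rm d}x\le\int_\Omega(\L Z)(x,t)\,\operatorname{sgn}(Z(x,t))\,{\rm d}x.
\]
Everything then reduces to showing that the nonlocal term on the right is nonpositive.

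To handle that term I would use that $J$ is supported in $B_d$, so that for $x\in\Omega$ only the values of $Z$ on $B_d(\Omega)$ enter $\L Z(x)$, and split the inner integral as $\int_{B_d(\Omega)}=\int_\Omega+\int_{B_d(\Omega)\setminus\Omega}$. On the diagonal block I would symmetrize in $x\leftrightarrow y$ using the evenness of $J$, which gives
\[
    \int_\Omega\!\int_\Omega J(x-y)\big(Z(y)-Z(x)\big)\operatorname{sgn}(Z(x))\,{\rm d}y\,{\rm d}x=-\tfrac12\int_\Omega\!\int_\Omega J(x-y)\big(Z(y)-Z(x)\big)\big(\operatorname{sgn}(Z(y))-\operatorname{sgn}(Z(x))\big)\,{\rm d}y\,{\rm d}x\le0,
\]
since $\operatorname{sgn}$ is nondecreasing. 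On the collar block, where $y\in B_d(\Omega)\setminus\Omega$ and hence $Z(y)\le0$ by hypothesis, the integrand $J(x-y)(Z(y)-Z(x))\operatorname{sgn}(Z(x))$ is pointwise $\le0$: if $Z(x)\le0$ then $\operatorname{sgn}(Z(x))=0$, while if $Z(x)>0$ then $Z(y)-Z(x)<0$ and $\operatorname{sgn}(Z(x))=1$. Both blocks being nonpositive, we obtain $\int_\Omega(\L Z)\operatorname{sgn}(Z)\,{\rm d}x\le0$, and therefore $\int_\Omega(Z^+)_t(\cdot,t)\,{\rm d}x\le0$.

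Finally I would integrate in time from $0$ to $t$ and invoke the initial condition $Z(\cdot,0)\le0$ in $\Omega$, so that $Z^+(\cdot,0)\equiv0$ on $\Omega$, to conclude $\int_\Omega Z^+(x,t)\,{\rm d}x\le0$ for every $t\in(0,T)$, i.e. $Z\le0$ in $\Omega\times(0,T)$. The only genuinely delicate points — the identity $Z_t\operatorname{sgn}(Z)=(Z^+)_t$ and the interchange of time differentiation with the spatial integral — are already settled in Proposition~\ref{prop-comparison}, being legitimate thanks to the regularity $Z\in C^1(0,T;L^1(\R^N))\cap L^\infty(0,T;L^\infty(\R^N))$. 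The genuinely new ingredient, and the step I would watch most carefully, is the correct bookkeeping of the exterior collar $B_d(\Omega)\setminus\Omega$: the symmetrization trick applies only on $\Omega\times\Omega$, and it is precisely the sign condition on the collar that makes the remaining cross term nonpositive, exactly the mechanism anticipated in Remark~\ref{rem-comparison}. I do not expect a serious obstacle beyond this.
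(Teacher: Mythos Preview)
Your proposal is correct and follows essentially the approach the paper intends: the paper does not give an explicit proof but states that the result is a corollary of the proof of Proposition~\ref{prop-comparison} together with the localization idea of Remark~\ref{rem-comparison}, and this is precisely what you carry out. The only cosmetic difference is that Remark~\ref{rem-comparison} extends the $x$-integral (using that $\operatorname{sgn}(Z)=0$ outside the domain) and then symmetrizes globally, whereas you keep the $x$-integral over $\Omega$ and instead split the $y$-integral into $\Omega$ and the collar; the two bookkeepings are equivalent and yield the same inequality.
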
	

\section{Time and space decay}
\setcounter{equation}{0}

In this section we prove that if $0\le u_0\in L^\infty(\R^N)$, $p\in(1,1+\frac2N)$ and $|x|^{\frac2{p-1}}u_0(x)\to A\ge0$ as $ |x|\to \infty$, then there are positive constants $C_0$ (depending of $p$, $J$ and $u_0$) and $C_p$ (depending only on $p$) such that
\begin{equation}\label{eq:invariant.time and space decay}
    u(x,t)\le C_0|x|^{-\frac2{p-1}}, \qquad u(x,t)\le C_p\, t^{-\frac1{p-1}}.
\end{equation}
Note that this size estimates are invariant under the scaling~\eqref{eq:asorption.scaling}.

In order to obtain the estimate for the spatial decay, we construct a stationary supersolution.
\begin{prop}\label{prop-supersolution}
    Let $\phi(x)=(C_1+C_2|x|^2)^{-\frac1{p-1}}$, $C_1,C_2>0$. If $C_2$ is small enough, then $\L\phi(x)\le\phi^p(x)$ for $|x|\ge2d$. If in addition $C_1\le C_2$ and $C_2$ is small enough, then $\phi\ge u_0$ in $\R^N$ and $\phi(x)\ge\|u_0\|_{L^\infty(\R^N)}\ge u(x,t)$ for all $t>0$ if $|x|\le 2d$.
\end{prop}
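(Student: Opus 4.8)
The plan is to verify directly that $\phi(x)=(C_1+C_2|x|^2)^{-\frac1{p-1}}$ is a supersolution to the stationary problem in the exterior region $\{|x|\ge 2d\}$, and separately to fix the constants so that $\phi$ dominates both $u_0$ and the global sup-bound of $u$ in the inner region $\{|x|\le 2d\}$. I would first compute $\L\phi(x)=\int J(x-y)\big(\phi(y)-\phi(x)\big)\,{\rm d}y$ using that $J$ is supported in $B_d$, so the integral only sees $y$ with $|x-y|<d$; for $|x|\ge 2d$ this keeps $y$ away from the origin where $\phi$ is singular, and $\phi$ is smooth there. The key observation is that $\phi$ is a rescaling of the profile: with $\psi(x)=(1+|x|^2)^{-\frac1{p-1}}$ one has $\phi(x)=C_1^{-\frac1{p-1}}\psi\big(\sqrt{C_2/C_1}\,x\big)$, and the quantity $\L\phi(x)/\phi^p(x)$ is, up to the scaling, governed by the analogous ratio for $\psi$. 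As $C_2\to 0$ the support radius $d$ becomes small relative to the length scale $\sqrt{C_1/C_2}$ on which $\phi$ varies, so $\L\phi$ approaches the local operator $\a\Delta\phi$, and one checks that $\a\Delta\psi<\psi^p$ in the relevant range. Making $C_2$ small thus forces $\L\phi(x)\le\phi^p(x)$ for all $|x|\ge 2d$.

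More concretely, I would Taylor-expand $\phi(y)$ around $y=x$ inside the integral. Since $J$ is radially symmetric, the odd-order terms integrate to zero, and one gets
\[
    \L\phi(x)=\a\Delta\phi(x)+R(x),
\]
where $\a=\frac1{2N}\int J(z)|z|^2\,{\rm d}z$ and $R(x)$ collects the higher-order remainder, controlled by $\sup_{|\xi-x|<d}|D^4\phi(\xi)|$ times $\int J(z)|z|^4\,{\rm d}z$. A direct computation gives $\Delta\phi$ and shows that $\a\Delta\phi(x)\le \tfrac12\phi^p(x)$ once $C_2$ is small, by tracking the explicit powers of $C_2$ that appear: each derivative of $\phi$ produces a factor of $C_2$, and $\phi^p=(C_1+C_2|x|^2)^{-\frac p{p-1}}$, so after simplification the inequality $\a\Delta\phi\le\tfrac12\phi^p$ reduces to a bound of the form (constant depending on $p,N$) $\cdot\,C_2\le\tfrac12$ uniformly in $x$. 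The remainder $R(x)$ carries an extra factor of $C_2$ relative to $\a\Delta\phi$, hence is likewise absorbed into the other half of $\phi^p$ for $C_2$ small; combining the two halves yields $\L\phi\le\phi^p$.

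For the second assertion, in the region $|x|\le 2d$ I would simply use monotonicity: $\phi(x)\ge\phi(2d\,e)=(C_1+4C_2d^2)^{-\frac1{p-1}}$, where $e$ is a unit vector, and I need this to exceed $\|u_0\|_{L^\infty(\R^N)}$, which (by Corollary~\ref{coro-nonnegative} and the bound $u\le u_\L\le\|u_0\|_{L^\infty}$) also dominates $u(x,t)$. This requires $C_1+4C_2d^2\le\|u_0\|_{L^\infty(\R^N)}^{-(p-1)}$, so $C_1$ and $C_2$ must be taken small, which is consistent with the smallness already imposed. To guarantee $\phi\ge u_0$ everywhere, I would split into $|x|\le 2d$, handled by the bound just described, and $|x|\ge 2d$, where I use the hypothesis $|x|^{\frac2{p-1}}u_0(x)\to A$: this gives $u_0(x)\le C_0|x|^{-\frac2{p-1}}$ for some $C_0$, while $\phi(x)\ge (2C_2)^{-\frac1{p-1}}|x|^{-\frac2{p-1}}$ when $C_1\le C_2$ and $|x|\ge 1$ (and one handles the bounded annulus $2d\le|x|\le 1$ by continuity and positivity), so choosing $C_2$ small enough that $(2C_2)^{-\frac1{p-1}}\ge C_0$ closes the comparison. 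The main obstacle is the first part: controlling the nonlocal operator's deviation from $\a\Delta$ uniformly in $x\in\{|x|\ge 2d\}$, since the estimate must be genuinely uniform over an unbounded region where $\phi$ and all its derivatives decay at different polynomial rates. The scaling structure of $\phi$ is what makes this uniformity tractable, and keeping careful track of the powers of $C_2$ is the crux of the argument.
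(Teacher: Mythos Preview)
Your approach is essentially the paper's: Taylor-expand $\phi(y)-\phi(x)$ using the radial symmetry of $J$, then bound derivatives of $\phi$ by $C_2\phi^p$. The paper, however, stops at second order with \emph{integral remainder},
\[
\L\phi(x)=\frac12\sum_{i,j}\int J(x-y)(y_i-x_i)(y_j-x_j)\Big(\int_0^1\phi_{x_ix_j}\big(x+s(y-x)\big)\,{\rm d}s\Big)\,{\rm d}y,
\]
and bounds $\phi_{x_ix_j}$ directly at the intermediate point by $\frac{4p}{(p-1)^2}C_2\phi^p$, using $C_2|x|^2/(C_1+C_2|x|^2)\le1$ together with $|x+s(y-x)|\ge|x|/2$. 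No fourth derivatives are needed.

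Your fourth-order route can be made to work, but the specific claim that ``$R(x)$ carries an extra factor of $C_2$ relative to $\a\Delta\phi$'' is not correct. The lowest-order term in $D^4\phi$ is (up to constants) $C_2^2(C_1+C_2|x|^2)^{-\frac p{p-1}-1}$, so
\[
\frac{|D^4\phi(x)|}{\phi^p(x)}\ \gtrsim\ \frac{C_2^2}{C_1+C_2|x|^2},
\]
which is $C_2^2/C_1$ in general, not $O(C_2^2)$ uniformly. The argument is rescued only because you are in $\{|x|\ge2d\}$: there $C_1+C_2|x|^2\ge 4d^2C_2$, giving $|D^4\phi|/\phi^p\le C(p,d)\,C_2$, the \emph{same} order as $\a\Delta\phi/\phi^p$. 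So both pieces are $O(C_2)\phi^p$ and the conclusion survives, but not for the reason you state. Likewise, the heuristic ``$\a\Delta\psi<\psi^p$'' for $\psi=(1+|x|^2)^{-1/(p-1)}$ is false in general (the ratio $\a\Delta\psi/\psi^p$ is bounded but depends on $\a,p,N$, and can exceed~$1$); what matters is that it is bounded, so that the prefactor $C_2$ does the work.

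Your treatment of the second assertion is fine and matches the paper's: dominate $\|u_0\|_{L^\infty}$ on a bounded ball by taking $C_1,C_2$ small, and use $C_1\le C_2$ to get $\phi(x)\ge(2C_2)^{-1/(p-1)}|x|^{-2/(p-1)}$ for large $|x|$, which beats $u_0$ by the decay hypothesis.
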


\begin{proof}
As $J$ is radially symmetric, a Taylor's expansion shows that
\begin{align*}
    \L\phi(x)&=\int J(x-y)\big(\phi(y)-\phi(x)\big)\,{\rm d}y\\
    &=\frac12\sum_{i,j=1}^N\int J(x-y)(y_i-x_i)(y_j-x_j)\Big(\int_0^1\phi_{x_ix_j}(x+s(y-x))\,{\rm d}s\Big)\,{\rm d}y.
\end{align*}
To estimate the right-hand side we observe that, since $C_2>0$,
\begin{align*}
    \phi_{x_ix_j}(x)&=\frac{4p}{(p-1)^2}C_2^2x_ix_j(C_1+C_2|x|^2)^{-\frac p{p-1}-1}-\delta_{ij}\frac{2C_2}{p-1}(C_1+C_2|x|^2)^{-\frac p{p-1}}\\
    &\le \frac{4p}{(p-1)^2}C_2\frac{C_2|x|^2}{C_1+C_2|x|^2}(C_1+C_2|x|^2)^{-\frac p{p-1}}\le\frac{4p}{(p-1)^2}C_2(C_1+C_2|x|^2)^{-\frac p{p-1}}.
\end{align*}
Besides, if $|x|\ge2d$ and $|x-y|<d$,  then $|x+s(y-x)|\ge \frac{|x|}2$ for every $0<s<1$. Hence, using again the radial symmetry of the kernel, if $|x|\ge2d$ we have
\begin{align*}
    \L\phi(x)& \le \frac{2p}{(p-1)^2}C_2\Big(\int J(z)|z|^2\,{\rm d}z\Big) \Big(C_1+\frac{C_2}4|x|^2\Big)^{-\frac p{p-1}}\\
    &\le 4^{\frac p{p-1}}\frac{2p}{(p-1)^2}C_2\Big(\int J(z)|z|^2\,{\rm d}z\Big)\ \phi(x)^p\le \phi(x)^p
\end{align*}
if $C_2$ is small enough, how small depending only on $p$ and $J$.

Let now $B>\max\{1,2d\}$ be such that $|x|^{\frac2{p-1}}u_0(x)\le A+1$ if $|x|>B$, and let $ C_2$ be sufficiently small so that $(2C_2)^{-\frac1{p-1}}\ge A+1$. Let $C_1\le C_2$. Then
\[
    \phi(x)\ge (C_2(1+|x|))^{-\frac1{p-1}}\ge (2C_2)^{-\frac1{p-1}}>A+1\ge |x|^{\frac2{p-1}}u_0(x)\ge u_0(x)\quad\textup{if }|x|>B.
\]
Taking $C_2$ even smaller, if needed,  so that $(C_1+C_2B^2)^{-\frac1{p-1}}\ge\|u_0\|_{L^\infty(\R^N)}$, then
\[
    \phi(x)\ge(C_1+C_2B^2)^{-\frac1{p-1}}\ge\|u_0\|_{L^\infty(\R^N)}\quad\textup{if }|x|\le B.
\]
Summarizing, if $C_1\le C_2$ and $C_2$ is small enough, then, on the one hand, $\phi(x)\ge u_0(x)$ for all $x\in\mathbb{R}^N$, and on the other hand, as $B>2d$, $\phi(x)\ge\|u_0\|_{L^\infty(\R^N)}\ge u(x,t)$ if $|x|\le 2d$ for all $t>0$.
\end{proof}

As a corollary we obtain a spatial decay estimate for $u$ by comparison.
\begin{coro}\label{coro-space decay}
    There is a constant $C_0$ such that
    \[
        0\le |x|^{\frac 2{p-1}}u(x,t)\le(1+|x|)^{\frac 2{p-1}}u(x,t)\le C_0\quad\text{in }\R^N\times(0,\infty).
    \]
\end{coro}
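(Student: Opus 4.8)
The plan is to show that the stationary function $\phi$ produced in Proposition~\ref{prop-supersolution} dominates $u$ on all of $\R^N\times(0,\infty)$, and then to read off the estimate from the explicit form of $\phi$. First I would fix $C_1\le C_2$ with $C_2$ small enough that all the conclusions of Proposition~\ref{prop-supersolution} hold, and set $\phi(x)=(C_1+C_2|x|^2)^{-1/(p-1)}$. Since $p<1+\frac2N$, the exponent $\frac2{p-1}$ exceeds $N$, so $\phi\in L^1(\R^N)\cap L^\infty(\R^N)$; this guarantees that $w:=u-\phi$ lies in $C^1(0,T;L^1(\R^N))$ and that the comparison argument of Proposition~\ref{prop-comparison} may be applied.

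The core step is a comparison in the exterior region via Remark~\ref{rem-comparison}(i) with $a=2d$. Because $\L\phi\le\phi^p=|\phi|^{p-1}\phi$ for $|x|\ge2d$ and $\phi$ is time-independent, $\phi$ is a stationary supersolution of the nonlocal equation in $\{|x|>2d\}\times(0,T)$ for every $T>0$, while the solution $u$ is in particular a subsolution. At the initial time we have $u(\cdot,0)=u_0\le\phi$ everywhere, hence on $\{|x|>2d\}$; and in the interior, Proposition~\ref{prop-supersolution} together with the bound $\|u(\cdot,t)\|_{L^\infty(\R^N)}\le\|u_0\|_{L^\infty(\R^N)}$ from~\eqref{eq:L1.Linfty.estimates} gives $u(x,t)\le\|u_0\|_{L^\infty(\R^N)}\le\phi(x)$ whenever $|x|\le2d$ and $t>0$. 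These are exactly the hypotheses of the exterior comparison principle, which yields $u\le\phi$ on $\{|x|>2d\}\times(0,T)$; combined with the interior bound this gives $u(x,t)\le\phi(x)$ throughout $\R^N\times(0,\infty)$, the restriction to finite $T$ being harmless since $u$ is global by Corollary~\ref{coro-global existence} and $\phi$ is stationary.

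It then remains to convert the bound $u(x,t)\le(C_1+C_2|x|^2)^{-1/(p-1)}$ into the claimed form. Writing
\[
    (1+|x|)^{\frac2{p-1}}u(x,t)\le\Big(\frac{(1+|x|)^2}{C_1+C_2|x|^2}\Big)^{\frac1{p-1}},
\]
the ratio on the right is continuous in $|x|$ on $[0,\infty)$ and tends to $1/C_2$ as $|x|\to\infty$, hence is bounded by a constant depending only on $C_1,C_2$; taking the $\frac1{p-1}$ power yields the constant $C_0$. The lower bound $0\le|x|^{\frac2{p-1}}u(x,t)$ follows from nonnegativity of $u$ (Corollary~\ref{coro-nonnegative}), and the middle inequality is immediate from $|x|\le1+|x|$ and positivity of the exponent.

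The only genuinely delicate point is the correct invocation of the exterior comparison principle: $\phi$ is a supersolution only away from the origin, so one cannot compare on all of $\R^N$ directly and must instead supply the interior ordering $u\le\phi$ on $\{|x|\le2d\}$ for all $t$ as an independent hypothesis — which is precisely what the $L^\infty$ estimate and the lower bound $\phi\ge\|u_0\|_{L^\infty(\R^N)}$ on $B_{2d}$ provide. Everything else is routine.
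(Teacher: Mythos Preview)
Your proof is correct and follows exactly the approach of the paper: apply the exterior comparison principle (Remark~\ref{rem-comparison}(i)) with $a=2d$, using that $\phi$ is a stationary supersolution for $|x|>2d$, that $\phi\ge u_0$ everywhere, and that $\phi\ge\|u_0\|_{L^\infty}\ge u(\cdot,t)$ on $B_{2d}$ for all $t>0$. Your added remarks---that $\phi\in L^1(\R^N)$ because $\frac2{p-1}>N$, and the explicit conversion to the $(1+|x|)^{2/(p-1)}$ form---are useful details that the paper leaves implicit.
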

\begin{proof}
By Proposition \ref{prop-supersolution}, $\phi$ is a  supersolution in $|x|>2d$ to the nonlocal equation in~\eqref{eq-problem without sign}. Moreover, $\phi\ge u_0$ in $\R^N$ and $\phi(x)\ge u(x,t)$ in $|x|\le 2d$, $t>0$. Hence, by comparison (see Remark~\ref{rem-comparison}), we have that $u(x,t)\le \phi(x)$ for all $x\in\mathbb{R}^N$, $t>0$.
\end{proof}

Now, we obtain the time decay.
\begin{prop}\label{prop-time decay}
    There exists a constant $C_p$ depending only on $p$ such that
    \[
            u(x,t)\le C_p t^{-\frac1{p-1}}.
    \]
\end{prop}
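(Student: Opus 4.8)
The plan is to compare $u$ with the spatially homogeneous \emph{flat solution} $\mathcal{F}(t)=\big((p-1)t\big)^{-1/(p-1)}$, which is the natural candidate for a universal supersolution since it carries exactly the absorption decay rate. First I would record that $\mathcal{F}$, viewed as a function constant in $x$, is an exact solution of the nonlocal problem: since $\mathcal{F}_t=-\mathcal{F}^p$ and $\L c=\int J(x-y)(c-c)\,{\rm d}y=0$ for any constant $c$, we have $\mathcal{F}_t-\L\mathcal{F}=-\mathcal{F}^p$. The decisive advantage is that the resulting constant $C_p=(p-1)^{-1/(p-1)}$ depends only on $p$, not on $u_0$.

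To deal with the fact that $\mathcal{F}(0^+)=+\infty$, I would shift it in time. Using the $L^\infty$ bound $u(\cdot,t)\le\|u_0\|_{L^\infty(\R^N)}$ from~\eqref{eq:L1.Linfty.estimates}, I choose $\tau>0$ with $\mathcal{F}(\tau)=\|u_0\|_{L^\infty(\R^N)}$, that is $\tau=\frac1{p-1}\|u_0\|_{L^\infty(\R^N)}^{-(p-1)}$, and set $w=u-\mathcal{F}(\cdot+\tau)$, so that $w(\cdot,0)=u_0-\|u_0\|_{L^\infty(\R^N)}\le0$. The observation that makes the argument self-contained is that, although $\mathcal{F}(\cdot+\tau)\notin L^1(\R^N)$, its positive part is controlled: since $\mathcal{F}>0$ and $u\ge0$ we have $0\le w^+\le u$, whence $w^+(\cdot,t)\in L^1(\R^N)$; moreover the set $\{w(\cdot,t)>0\}=\{u(\cdot,t)>\mathcal{F}(t+\tau)\}$ has finite measure, bounded by $\|u_0\|_{L^1(\R^N)}/\mathcal{F}(t+\tau)$ via Chebyshev's inequality.

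With these facts I would rerun the argument of Proposition~\ref{prop-comparison} on $w^+$ directly. Using $\L\mathcal{F}=0$ one gets $w_t=\L w-(u^p-\mathcal{F}^p)$; multiplying by $\operatorname{sgn}(w)$ and integrating over $\R^N$, the absorption difference has the right sign, $-(u^p-\mathcal{F}^p)\operatorname{sgn}(w)\le0$ (where $w>0$ we have $u>\mathcal{F}\ge0$, hence $u^p\ge\mathcal{F}^p$), while the nonlocal term is treated by the same symmetrization as in Proposition~\ref{prop-comparison} to give $\int\L w\,\operatorname{sgn}(w)\,{\rm d}x\le0$. All these manipulations are legitimate because the integration effectively takes place on the finite-measure set $\{w>0\}$, with $J$ and $w$ bounded. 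This yields $\frac{d}{dt}\int w^+\,{\rm d}x\le0$, and since $\int w^+(\cdot,0)\,{\rm d}x=0$ we conclude $w^+\equiv0$, i.e. $u(x,t)\le\mathcal{F}(t+\tau)\le\mathcal{F}(t)=C_p\,t^{-1/(p-1)}$, the last inequality because $\mathcal{F}$ is decreasing.

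The main obstacle is precisely the non-integrability of the flat supersolution, which blocks a direct appeal to Proposition~\ref{prop-comparison} (stated in the class $C([0,T];L^1(\R^N))$); the finite-measure property of $\{w>0\}$ is what repairs this and keeps the $L^1$ computation valid. An alternative would be to localize, comparing $u$ with $\mathcal{F}(\cdot+\tau)$ on balls $B_R$ through Remark~\ref{rem-comparison}\textup{(ii)} or Proposition~\ref{prop-comparison 2} and letting $R\to\infty$, but the direct argument on $w^+$ seems cleaner and avoids any boundary-layer bookkeeping.
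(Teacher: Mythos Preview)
Your argument is correct and takes a genuinely different route from the paper. The paper also compares with the shifted flat solution $g(t)=C_p(t+t_0)^{-1/(p-1)}$, but it does \emph{not} rerun the $L^1$ machinery of Proposition~\ref{prop-comparison}. Instead it first invokes the spatial decay from Corollary~\ref{coro-space decay}, $u(x,t)\le C_0|x|^{-2/(p-1)}$, to find, for each $T>0$, a radius $\beta$ such that $u(x,t)\le g(t)$ whenever $|x|\ge\beta$ and $t\in(0,T)$; this supplies the boundary inequality needed to apply Remark~\ref{rem-comparison}\,(ii) on $B_\beta$, and one then lets $t_0\to0^+$. Your approach bypasses the spatial decay entirely: the observation that $0\le w^+\le u\in L^1(\R^N)$ and $|\{w>0\}|<\infty$ (via Chebyshev) is exactly what legitimizes the symmetrization $\int\L w\,\operatorname{sgn}(w)\,{\rm d}x\le0$ even though $w\notin L^1$, and hence makes the Kato-type inequality go through globally. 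This buys independence from Proposition~\ref{prop-supersolution}/Corollary~\ref{coro-space decay} and works under the sole hypothesis $u_0\in L^1\cap L^\infty$, whereas the paper's route ties the time decay to the preceding spatial estimate. Note, incidentally, that your suggested ``alternative'' of localizing to $B_R$ via Remark~\ref{rem-comparison}\,(ii) is essentially the paper's proof and would \emph{require} Corollary~\ref{coro-space decay} to verify the boundary-layer condition; merely letting $R\to\infty$ without that input does not close.
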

\begin{proof}
Given any $t_0>0$, the function $g(t)=C_p(t+t_0)^{-\frac1{p-1}}$, where $C_p=(p-1)^{-\frac1{p-1}}$, is a solution to the ODE $g'(t)=-g(t)^p$. If $t_0$ is small enough,  $g(0)\ge \|u_0\|_{L^\infty(\R^N)}$. On the other hand, by Corollary~\ref{coro-space decay}, given $T>0$ there is some $\beta=\beta(T,p)$ such that $g(t)\ge u(x,t)$ if $t\in(0,T)$ and $|x|\ge \beta$. Hence, by comparison (see Remark~\ref{rem-comparison}), $g(t)\ge u(x,t)$ for all~$t\in(0,T)$ and $x\in\mathbb{R}^N$. Letting $t_0\to 0^+$, we deduce that $u(x,t)\le C_p t^{-\frac1{p-1}}$ for all $t\in (0,T)$ and $x\in\mathbb{R}^N$. As $T>0$ is arbitrary, the result is proved.	
\end{proof}

\section{Large-time behavior}
\setcounter{equation}{0}

 We now prove our large-time behavior results. As announced in the Introduction, the first step is to scale the solution $u$ to~\eqref{eq-problem without sign}. The right scaling, given by~\eqref{eq:asorption.scaling}, is suggested by the time-decay estimate for $u$ in the previous section. We then obtain compactness, that gives the existence of convergent subsequences of the family $\{u_\lambda\}$. The third step, the most involved, is to identify uniquely the possible limits. The fourth and last step is to translate the convergence in $\lambda$ of the functions~$u_\lambda$ into large-time results for the function~$u$.  

\subsection{Scaling}

 The first step is to choose the right scaling. Since we want to keep the absorption term and the diffusive character of the equation, we take
\begin{equation*}\label{eq-rescaled}
    \ul(x,t)=\la^{\frac2{p-1}}u(\la x,\la^2t), \quad \la>0.
\end{equation*}
The key point is that, thanks to the estimates~\eqref{eq:invariant.time and space decay} obtained in the previous section, we know that
\[
     \ul(x,t)\le C_pt^{-\frac1{p-1}},\qquad \ul(x,t)\le C_0|x|^{-\frac2{p-1}}.
\]
In particular, the family $\{u_\lambda\}$ is locally bounded in $\R^N\times(0,\infty)$.

\subsection{Compactness}

Once we know that the family $\{\ul\}$ is locally bounded in $\R^N\times(0,\infty)$, its precompactness is obtained with \emph{the same computations} as in~\cite[Section 3]{terra-wol}. The functions $f$ and $F$ in that paper correspond here to $f(\lambda)=\la^{\frac2{p-1}}$ and $F(\la)=1$. Hence, we have the following result.
\begin{prop}\label{prop-compactness}
    The family $\{\ul\}$ has a subsequence $\{u_{\lambda_k}\}$ that converges uniformly on compact subsets of $\R^N\times(0,\infty)$ to a function $U$ satisfying the semilinear local heat equation~\eqref{eq:local.heat.absorption}.
\end{prop}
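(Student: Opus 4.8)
The plan is to obtain local precompactness of $\{\ul\}$ via the Arzel\`a--Ascoli theorem and then identify the limit using the weak convergence $\L_\la\to\a\Delta$. The uniform local bound
\[
\ul(x,t)\le C_pt^{-\frac1{p-1}},\qquad \ul(x,t)\le C_0|x|^{-\frac2{p-1}},
\]
is already in hand, so the only thing left to establish is equicontinuity on compact subsets of $\R^N\times(0,\infty)$, uniformly in $\la$. Two structural obstacles make this delicate: the operators $\L_\la$ have \emph{no regularizing effect}, so one cannot bootstrap to derivative bounds for $\ul$ directly; and each $\ul$ solves an equation with a \emph{different} operator, so a single parabolic regularity theory does not apply.

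The key tool is the mild (Duhamel) formulation together with the explicit structure of the semigroup generated by $\L_\la=\la^2(J_\la\ast\,\cdot\,-I)$, which splits as
\[
e^{t\L_\la}\varphi=e^{-\la^2t}\varphi+K^\la_t\ast\varphi,\qquad K^\la_t=e^{-\la^2t}\sum_{n\ge1}\frac{(\la^2t)^n}{n!}\,J_\la^{\ast n},
\]
where $K^\la_t$ is smooth. Fixing $t_0>0$ and writing the representation from $t_0/2$,
\[
\ul(\cdot,t)=e^{(t-t_0/2)\L_\la}\ul(\cdot,t_0/2)-\int_{t_0/2}^t e^{(t-s)\L_\la}\,\ul^p(\cdot,s)\,\textup{d}s,\qquad t\ge t_0,
\]
the singular term carries the factor $e^{-\la^2(t-t_0/2)}\le e^{-\la^2 t_0/2}\to0$ as $\la\to\infty$, so it is negligible, and the entire modulus of continuity must be extracted from convolution against the smooth kernels $K^\la_\tau$ with $\tau\ge t_0/2$. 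Since $J_\la^{\ast n}$ is the law of a sum of $n$ independent steps of variance $O(\la^{-2})$, the kernels $K^\la_\tau$ converge to the Gaussian kernel of $e^{\a\tau\Delta}$ and, crucially, are uniformly (in $\la$, for $\tau\ge t_0/2$) controlled in their $L^1$-modulus of translation and in time-continuity; convolving the uniformly bounded $\ul(\cdot,t_0/2)$ and $\ul^p$ against them yields equicontinuity of $\{\ul\}$ in $x$ and $t$ on compacta, uniformly in $\la$. These are exactly the computations of \cite[Section 3]{terra-wol}, applied here with $f(\la)=\la^{2/(p-1)}$ and $F(\la)=1$.

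Granting equicontinuity, Arzel\`a--Ascoli gives a uniformly convergent subsequence on each compact set, and a diagonal argument over an exhaustion of $\R^N\times(0,\infty)$ by compacta produces a single subsequence $\{u_{\la_k}\}$ converging locally uniformly to some $U$, which inherits the bounds above. To identify $U$ as a solution of \eqref{eq:local.heat.absorption}, I would pass to the limit in the Duhamel identity: the kernel convergence $K^{\la_k}_\tau\to G_{\a\tau}$ (the Gaussian semigroup of $\a\Delta$), the local uniform convergence $u_{\la_k}\to U$, and the continuity of $s\mapsto s^p$ allow one to send $\la_k\to\infty$, giving
\[
U(\cdot,t)=e^{\a(t-t_0/2)\Delta}U(\cdot,t_0/2)-\int_{t_0/2}^te^{\a(t-s)\Delta}U^p(\cdot,s)\,\textup{d}s,
\]
the mild formulation of \eqref{eq:local.heat.absorption}; standard parabolic regularity then upgrades $U$ to a classical solution.

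The main obstacle is precisely the uniform-in-$\la$ equicontinuity. Because $\L_\la$ provides no smoothing, all compactness must come from the non-singular part of the semigroup kernel, and one has to verify that its modulus of continuity does \emph{not} degenerate as $\la\to\infty$; only the singular Dirac part is permitted to degenerate, which it does harmlessly for $t$ bounded away from $0$. This is the heart of the matter, and it is exactly the content of the estimates imported from \cite{terra-wol}.
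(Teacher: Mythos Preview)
Your proposal is correct and matches the paper's approach exactly: the paper does not give a self-contained proof but simply invokes the computations of \cite[Section~3]{terra-wol} with the same choice of parameters $f(\la)=\la^{2/(p-1)}$, $F(\la)=1$ that you identify. Your sketch of the Duhamel/semigroup-splitting mechanism and the role of the smooth part $K^\la_t$ is an accurate summary of what that reference contains, so there is nothing to correct.
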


\subsection{Identification of the limit function $U$}

The next step is to characterize the possible limit functions $U$. We will prove  that there is only one possible limit, so that convergence is not restricted to a subsequence. 

We consider separately the cases $A>0$ and $A=0$. When $A=0$ we prove that $U=V$, where $V$ is the unique nonnegative very singular solution to the semilinear heat equation~\eqref{eq:local.heat.absorption}, that is,  the unique nonnegative solution to this equation satisfying~\eqref{eq:conditions.VSS}. When $A>0$ we prove that $U=U_A$, where $U_A$ is the unique solution to~\eqref{eq:local.heat.absorption} satisfying~\eqref{eq:initial.datum.profile.critical.decay}, that is, with $A|x|^{-\alpha}$ as initial trace. 

\begin{rem}
    The existence of solutions $V$ and $U_A$, $A>0$, to~\eqref{eq:local.heat.absorption} with the desired behaviour as $t\to0^+$ had already been proved in~\cite{BPT}. A uniqueness proof for $U_A$ can be found in \cite{K1}. As for the very singular solution $V$, uniqueness was only known within the class of \emph{radially symmetric self-similar} functions; see~\cite{BPT}.  This is not enough for our purposes, since we cannot guarantee \emph{a priori} that the limit $V$ has these two properties. However, starting from the uniqueness result in the restricted class, and repeating the arguments used in~\cite{K2} to deal with the same issue for the fast diffusion equation, the uniqueness of a nonnegative very singular solution to~\eqref{eq:local.heat.absorption} without further assumptions follows.
\end{rem}

We start with the case $A=0$. We already know that all limit functions $U$ are solutions to the semilinear local heat equation with absorption~\eqref{eq:local.heat.absorption}. The next step is then to prove that they have the very singular behaviour~\eqref{eq:conditions.VSS} as $t\to0^+$. We start by showing that they go to 0 as $t\to0^+$ for all $x\neq0$.  

\begin{prop}\label{prop-U to 0}
    Let $u_0$ satisfy \eqref{eq-condition at infinity} with $A=0$. Let $U=\lim_{j\to\infty}u_{\la_j}$ with $\la_j\to\infty$. Then, $U(x,t)\to0$ as $t\to0^+$ if $x\neq0$.
\end{prop}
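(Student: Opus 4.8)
The plan is to exploit the spatial decay estimate from Corollary~\ref{coro-space decay}, which is invariant under the scaling~\eqref{eq:asorption.scaling}, in order to control $u_{\lambda_j}$ away from the origin and pass to the limit. Recall that the scaled spatial estimate reads $u_{\lambda_j}(x,t)\le C_0|x|^{-\frac2{p-1}}$, but this alone cannot give $U(x,t)\to0$ as $t\to0^+$; it only bounds $U$ by $C_0|x|^{-\frac2{p-1}}$ uniformly in $t$. The crucial point where $A=0$ must enter is that the bound on $u_{\lambda_j}$ should improve as $\lambda_j\to\infty$. Concretely, since $|y|^{\frac2{p-1}}u_0(y)\to0$ as $|y|\to\infty$, for each fixed $x\neq0$ and any $\delta>0$ there is $R_\delta$ such that $|y|^{\frac2{p-1}}u_0(y)\le\delta$ for $|y|\ge R_\delta$; evaluating the original spatial supersolution $\phi$ from Proposition~\ref{prop-supersolution} with a constant $C_2$ that can be taken proportional to $\delta$ on the region $|y|\ge R_\delta$, one obtains a scaled bound of the form $u_{\lambda_j}(x,t)\le C\,\delta\,|x|^{-\frac2{p-1}}$ valid for $|\lambda_j x|\ge R_\delta$, i.e.\ for $j$ large once $x\neq0$ is fixed. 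Letting $j\to\infty$ and then $\delta\to0$ would force $U(x,t)=0$ for all $t$, which is stronger than claimed.

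The subtlety is that the spatial supersolution is stationary and does not directly encode the time behaviour; the honest route is to combine the spatial smallness of the datum with the time-decay mechanism. First I would fix $x_0\neq0$ and a small ball $B_\rho(x_0)$ with $0\notin\overline{B_\rho(x_0)}$. For $j$ large, $\lambda_j B_\rho(x_0)$ lies entirely in the region $\{|y|\ge R_\delta\}$ where $|y|^{\frac2{p-1}}u_0(y)\le\delta$. On that region I would build, using Proposition~\ref{prop-supersolution}, a supersolution $\phi_\delta$ to the original nonlocal equation with $\phi_\delta(y)\le C\delta^{\frac{-1}{\,}}$—more precisely with the profile $(C_1+C_2|y|^2)^{-\frac1{p-1}}$ whose constants are tuned so that $\phi_\delta$ dominates $u_0$ where needed and is of size $O(\delta^{1/(p-1)})$ times the homogeneous profile. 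Comparison (Remark~\ref{rem-comparison}) then yields $u(y,t)\le\phi_\delta(y)$ in that region for all $t$, and after scaling, $u_{\lambda_j}(x,t)\le C\,\delta^{1/(p-1)}|x|^{-\frac2{p-1}}$ for $x\in B_\rho(x_0)$ and $j$ large.

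Passing to the limit in $j$ using the uniform convergence on compact subsets of $\R^N\times(0,\infty)$ from Proposition~\ref{prop-compactness}, I obtain $U(x,t)\le C\,\delta^{1/(p-1)}|x|^{-\frac2{p-1}}$ for $x\in B_\rho(x_0)$ and all $t>0$ in the relevant compact time range. Since $\delta>0$ is arbitrary and the constants $C_1,C_2$ can be chosen so that $\delta\to0$, I conclude $U(x,t)=0$ on $B_\rho(x_0)$ for every $t$ in the range, in particular $U(x_0,t)\to0$ as $t\to0^+$.

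The main obstacle I anticipate is matching the $\delta$-dependence correctly through the supersolution construction: one must verify that the constants $C_1,C_2$ in Proposition~\ref{prop-supersolution} can be selected to make both the supersolution inequality $\L\phi\le\phi^p$ hold on $|y|\ge2d$ \emph{and} the profile small of the right order $O(\delta^{1/(p-1)})$ on the far region, while still dominating $u_0$ there and dominating $u$ on the inner ball $|y|\le 2d$. The interior comparison requirement ($\phi\ge\|u_0\|_{L^\infty}$ for $|y|\le 2d$) competes with wanting $\phi$ small on the far field, so the comparison in Remark~\ref{rem-comparison}(i) must instead be applied on the shifted annular region $\{|y|\ge R_\delta\}$ with the boundary layer of width $d$ handled by the already-known global bound $u\le\phi$ from Corollary~\ref{coro-space decay}. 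Getting this geometry and the nonlocal boundary strip of width $d$ right, so that the comparison principle applies cleanly and the $\delta$-smallness survives the scaling, is the delicate part; the rest is a routine limit passage.
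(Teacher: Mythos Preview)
Your approach has a genuine gap that cannot be repaired along the lines you sketch. The stationary profile $\phi(y)=(C_1+C_2|y|^2)^{-\frac1{p-1}}$ from Proposition~\ref{prop-supersolution} satisfies $\L\phi\le\phi^p$ only when $C_2$ lies \emph{below} a fixed threshold depending solely on $p$ and $J$; consequently $|y|^{\frac2{p-1}}\phi(y)\to C_2^{-\frac1{p-1}}$ is bounded \emph{below} by a fixed positive constant. More generally, for any stationary supersolution behaving like $c|y|^{-\frac2{p-1}}$ at infinity, comparing the leading terms of $\L\phi$ and $\phi^p$ forces $c^{p-1}\gtrsim\a\cdot\frac2{p-1}\big(\frac2{p-1}+2-N\big)>0$ (the constant is positive because $p<1+\frac2N$). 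Hence no stationary barrier of this type has a far-field coefficient of order $O(\delta^{1/(p-1)})$, and your comparison on the exterior region $\{|y|\ge R_\delta\}$ can never yield anything better than $u(y,t)\le\bar c\,|y|^{-\frac2{p-1}}$ with $\bar c$ fixed. This is not a technicality: your argument, if it went through, would give $U(x,t)=0$ for \emph{all} $t>0$, which is false --- the paper next shows $\int U(x,t)\,\textup{d}x\to\infty$ as $t\to0^+$ and identifies $U$ with the very singular solution, strictly positive for $t>0$.

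The paper's proof uses a \emph{time-dependent} barrier instead. Fixing $x_0\neq0$ and a ball $B=B_a(x_0)$ avoiding the origin, one takes $\psi\in C^{2,\alpha}(\bar B)$ with $\psi\equiv\varepsilon$ near $x_0$ and $\psi\equiv M$ near $\partial B$ (where $M$ is the uniform bound on $u_\lambda$ away from $0$), checks from \eqref{eq-condition at infinity} with $A=0$ that $u_\lambda(\cdot,0)\le\psi$ on $B$ for $\lambda$ large, and compares $u_\lambda$ with the solution $v_\lambda$ of the \emph{linear} scaled problem $v_t-\L_\lambda v=0$ in $B$ with initial datum $\psi$ and exterior datum $M$. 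By~\cite{CER}, $v_\lambda\to V$, the solution of the local heat equation in $B$ with the same data, so $U\le V$ and $\limsup_{t\to0^+}U(x_0,t)\le V(x_0,0)=\psi(x_0)=\varepsilon$. The smallness comes from the initial datum (made small at scale $\lambda$ by the hypothesis $A=0$), not from a stationary profile; the barrier is allowed to grow in time, which is precisely why the argument yields $U(x_0,0^+)=0$ without the false conclusion $U(x_0,t)=0$ for later~$t$.
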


\begin{proof} Let $x_0\neq0$, $a<\frac{|x_0|}2$ and $\la>\frac{4d}{|x_0|}$. Let $B$ be the ball centered at $x_0$ with radius $a$ and $\ep\le \psi\in C^{2,\alpha}(\bar B)$ such that $\psi\equiv\ep$ in $B_{a/4}(x_0)$, $\psi=M$ in $B\setminus B_{a/2}(x_0)$. Here $M\ge u_\la(x,t)$ if $|x|>\frac{|x_0|}4$ (recall that $u_\la\le C_0|x|^{-\frac2{p-1}}$ for every $\la>0$).	Let $\ep'=\ep\big(\frac{|x_0|}2\big)^{\frac2{p-1}}$. By~\eqref{eq-condition at infinity}, $u_{\la}(x,0)\le\ep'|x|^{-\frac2{p-1}}\le \ep$ in $B$ if $\la$ is large enough,  so that $u_{\la}(x,0)\le \psi(x)$ in $B$.
	
Let now $v_\la$ be the solution to
\begin{equation}\label{eq-barrier in B}
    \begin{cases}
		v_t-\L_\la v=0&\mbox{in }B\times(0,\infty),\\
		v=M&\mbox{in }(\R^N\setminus B)\times(0,\infty),\\
		v(x,0)=\psi(x)&\mbox{in }B.
	\end{cases}
\end{equation}
Then, as $v_{\la}$ is a supersolution to \eqref{eq-problem without sign}, $u_{\la}\le v_{\la}$ in $B\times(0,\infty)$. But, as proved in~\cite{CER}, $v_{\la}\to V$ as $\la\to\infty$, where $V$ is the solution to
\begin{equation}\label{eq-V}
    \begin{cases}
 		V_t-\a\Delta V=0&\mbox{in }B\times(0,\infty),\\
 		V=M&\mbox{in }\partial B\times(0,\infty),\\
 		V(x,0)=\psi(x)&\mbox{in }B.
    \end{cases}
\end{equation}
Hence, $U\le V$ in $B\times(0,\infty)$, whence $\limsup_{t\to0^+}U(x_0,t)\le \psi(x_0)=\ep$. As $U\ge0$ and $\ep$ is arbitrary, $\lim_{t\to0^+}U(x_0,t)=0$.
\end{proof}

Let us prove now that $\lim_{t\to0^+}\int U(x,t)\,{\rm d}x=\infty$. To this end, we will find a bound from below for $\int \ul(x,t)\,{\rm d}x$. From the definition of $\ul$ we see that
\[
    \int\ul(x,t)\,{\rm d}x=\la^{\frac2{p-1}-N}\int u(x,\la^2t)\,{\rm d}x.
\]
To find a lower bound for this function, we construct a subsolution to $u$. So, let $L>0$ and $v$ the solution to
\begin{equation}\label{eq-subsolution to bound integral}
    \begin{cases}
        v_t-\L v=-v^p\quad&\mbox{in }\R^N\times(0,\infty),\\
        v(x,0)= L{\la^{N-\frac2{p-1}}}h(x)\quad&\mbox{in }\R^N,
    \end{cases}
\end{equation}
where
$$
    h(x)=u_0(x)\|u_0\|_{L^1(B_a)}^{-1}\chi_{B_a},
$$
with $a$ large so that $\|u_0\|_{L^1(B_a)}>0$. If $\la$ is large, then $v(x,0)\le u_0(x)$, whence $v(x,t)\le u(x,t)$.

In the next proposition we find a bound of the $L^1$-norm of $v(\cdot,t)$.
\begin{prop}\label{prop-bound of integral}
    Let $v$ be the solution to \eqref{eq-subsolution to bound integral}. There exist $C_0,\gamma>0$ such that
	\begin{equation*}\label{eq-bound integral}
	       \la^{\frac2{p-1}-N}	\int v(x,\la^2t)\,{\rm d}x \ge L-C_0L^p\big[\la^{-2\gamma}+ t^\gamma\big].
    \end{equation*}
\end{prop}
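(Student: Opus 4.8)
The plan is to integrate the equation in \eqref{eq-subsolution to bound integral} over $\R^N$ and control the absorption term, which is the only obstruction to mass conservation. Integrating the equation and using that $\int\L v\,{\rm d}x=0$ (since $\int J=1$ and the double integral of $J(x-y)(v(y)-v(x))$ vanishes by antisymmetry after exchanging $x$ and $y$), we obtain
\[
    \frac{{\rm d}}{{\rm d}t}\int v(x,t)\,{\rm d}x=-\int v^p(x,t)\,{\rm d}x.
\]
Integrating in time between $0$ and $\la^2t$ yields
\[
    \int v(x,\la^2t)\,{\rm d}x=\int v(x,0)\,{\rm d}x-\int_0^{\la^2t}\int v^p(x,s)\,{\rm d}x\,{\rm d}s.
\]
Since $\int v(x,0)\,{\rm d}x=L\la^{N-\frac2{p-1}}\|h\|_{L^1(\R^N)}=L\la^{N-\frac2{p-1}}$ by the normalization of $h$, multiplying through by $\la^{\frac2{p-1}-N}$ gives the desired first term $L$, and the whole task reduces to estimating the absorption integral
\[
    \la^{\frac2{p-1}-N}\int_0^{\la^2t}\int v^p(x,s)\,{\rm d}x\,{\rm d}s
\]
from above by $C_0L^p[\la^{-2\gamma}+t^\gamma]$.

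The main obstacle is controlling this absorption term, and here I expect the two available a priori bounds on $v$ to be essential. By comparison $v\le u\le u_\L$ (the purely diffusive solution), and also $v$ inherits a time-decay estimate of the form $v(x,s)\le C_ps^{-\frac1{p-1}}$ from Proposition~\ref{prop-time decay} applied with the rescaled initial mass in mind, together with the $L^1$ bound $\|v(\cdot,s)\|_{L^1}\le\|v(\cdot,0)\|_{L^1}=L\la^{N-\frac2{p-1}}$. The strategy is to write $\int v^p\,{\rm d}x\le\|v(\cdot,s)\|_{L^\infty}^{p-1}\|v(\cdot,s)\|_{L^1}$ and bound the $L^\infty$ norm using the smoothing of the purely diffusive equation: for the nonlocal heat equation with $L^1$ data, $\|u_\L(\cdot,s)\|_{L^\infty}\lesssim\|v(\cdot,0)\|_{L^1}\,s^{-N/2}$ for large $s$, while for small $s$ one uses $\|u_\L(\cdot,s)\|_{L^\infty}\le\|v(\cdot,0)\|_{L^\infty}=L\la^{N-\frac2{p-1}}\|h\|_{L^\infty}$. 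Splitting the time integral at a well-chosen threshold (related to $\la^{-2}$ after rescaling) and balancing the small-time and large-time contributions should produce the two terms $\la^{-2\gamma}$ and $t^\gamma$, with $\gamma>0$ determined by the exponent $1+\frac2N-p>0$ that encodes the subcritical absorption assumption $p<1+\frac2N$.

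The delicate point is keeping track of the powers of $\la$ through the rescaling $s\mapsto\la^2 s$ so that the prefactor $\la^{\frac2{p-1}-N}$ exactly absorbs the scaling weights and leaves behind only $L^p$ times a bounded combination of $\la^{-2\gamma}$ and $t^\gamma$. Concretely, after the change of variables the $L^1$ norm contributes $L\la^{N-\frac2{p-1}}$ and each factor of $\|v\|_{L^\infty}^{p-1}$ contributes the appropriate negative power of $s$ (or of $\la$ for the small-time part); since $N(p-1)<2$ the time integral of $s^{-N(p-1)/2}$ converges at the relevant endpoint and produces a positive power of the integration limit, which is the source of $t^\gamma$. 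The self-similar scaling invariance noted after \eqref{eq:invariant.time and space decay} strongly suggests that the exponent bookkeeping will close cleanly, with $\gamma=1-\frac{N(p-1)}2>0$ being the natural candidate, and I expect the constant $C_0$ to depend only on $p$, $N$, $J$ and $\|h\|_{L^\infty}$.
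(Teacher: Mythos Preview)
Your approach is correct and reaches the same exponent $\gamma=1-\tfrac{N(p-1)}{2}$, but the route differs from the paper's. You control $\int v^p$ via the interpolation $\int v^p\le\|v\|_{L^\infty}^{p-1}\|v\|_{L^1}$, bounding the $L^1$ norm by mass non-increase and the $L^\infty$ norm via comparison with the linear semigroup. The paper instead derives a \emph{pointwise} bound
\[
v(x,s)\le L\la^{N-\frac2{p-1}}\Big[e^{-s}h(x)+Cs^{-N/2}\chi_{\{|x|<\sqrt s\}}+Cs\,|x|^{-(N+2)}\chi_{\{|x|>\sqrt s\}}\Big]
\]
from three separate estimates on the regular part $W$ of the fundamental solution, and then integrates $v^p$ directly with a \emph{spatial} splitting. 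Your argument is more economical: it needs only the bound $W(\cdot,s)\le Cs^{-N/2}$ and bypasses the tail estimate $W(x,s)\le Cs|x|^{-(N+2)}$ altogether; the paper's route, on the other hand, gives sharper pointwise information on $v$ that might be useful elsewhere.

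Two small corrections are in order. First, the nonlocal semigroup does \emph{not} obey a pure $L^1\to L^\infty$ smoothing estimate, because the fundamental solution is $e^{-t}\delta+W(\cdot,t)$; the correct bound is $\|v(\cdot,s)\|_{L^\infty}\le e^{-s}\|v_0\|_{L^\infty}+Cs^{-N/2}\|v_0\|_{L^1}$. This still serves your purpose, since here $\|v_0\|_{L^\infty}$ and $\|v_0\|_{L^1}$ carry the same power of $\la$, and both pieces integrate over $[0,\la^2 t]$ to give exactly $C_0L^p[\la^{-2\gamma}+t^\gamma]$ after multiplying by $\la^{\frac2{p-1}-N}$; no $\la$-dependent time splitting is needed. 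Second, the absorption bound $v\le C_ps^{-1/(p-1)}$ that you mention should be dropped from the argument: inserting it in $\|v\|_{L^\infty}^{p-1}$ gives $s^{-1}$, whose time integral diverges logarithmically and yields no useful control.
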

\begin{proof}
From the equation we see that
\[	
    \la^{\frac2{p-1}-N}	\int v(x,\la^2t)\,{\rm d}x=L-	\la^{\frac2{p-1}-N}\int_0^{\la^2t}\int v^p(x,s)\,{\rm d}x{\rm d}s.
\]
Therefore, it is enough to get a good bound from above for $\int_0^t\int v^p(x,s)\,{\rm d}x{\rm d}s$. To this end, we use that $v$ is a subsolution to the equation without absorption. Hence,
\begin{align*}
    v(x,t)&\le e^{-t}v(x,0)+\int W(x-y,t)v(y,0)\,{\rm d}y\\
    &\le L{\la^{N-\frac2{p-1}}}\Big[e^{-t}h(x)+\int W(x-y,t)h(y)\,{\rm d}y\Big],
\end{align*}
where $W$ is the regular part of the fundamental solution ($\Gamma(\cdot,t)=e^{-t}\delta +W(\cdot,t)$; see \cite{IR}). To estimate the last integral we will use the following bounds for $W$,
\begin{equation}\label{eq:bounds.W}
    W(x,t)\le Ct,\qquad W(x,t)\le C\frac t{|x|^{N+2}},\qquad W(x,t)\le C t^{-\frac N2}.
\end{equation}
The first two were proved in \cite{terra-wol} and the last one in~\cite{PR}.

If $|x|<2a$, using the first bound for $W$ in~\eqref{eq:bounds.W} we get
\[
    \int W(x-y,t)h(y)\,{\rm d}y\le Ct.
\]
On the other hand, if $|x|>2a$ and $|y|<a$, then $|x-y|>\frac{|x|}2$. Notice that $h$ vanishes if $|y|>a$. Hence, using the second bound in~\eqref{eq:bounds.W} we obtain
\begin{equation}\label{eq:integral.estimate}
    \int W(x-y,t)h(y)\,{\rm d}y\le C\frac t{|x|^{N+2}}.
\end{equation}
Using both integral estimates we conclude that~\eqref{eq:integral.estimate} holds for every $x\in\R^N$.	Finally, the third bound in~\eqref{eq:bounds.W} yields
\[
    \int W(x-y,t)h(y)\,{\rm d}y\le Ct^{-\frac N2}.
\]

Combining all the above estimates,
\[
    v(x,t)\le L{\la^{N-\frac2{p-1}}}\Big[e^{-t}h(x)+Ct^{-\frac N2}\chi_{|x|<\sqrt t}+C\frac t{|x|^{N+2}}\chi_{|x|>\sqrt t}\Big],
\]
whence
\begin{align*}
    \int v^p(x,t)\,{\rm d}x&\le CL^p{\la^{(N-\frac2{p-1})p}}\Big[e^{-pt}\int h^p(x)\,{\rm d}x+\int_{|x|<\sqrt t}t^{-p\frac N2}\,{\rm d}x+\int_{|x|>\sqrt t}\frac {t^p}{|x|^{p(N+2)}}\Big]\\
	&\le CL^p{\la^{(N-\frac2{p-1})p}}\big[e^{-pt}+t^{(1-p)\frac N2}\big].
\end{align*}
Thus, since $(p-1)\frac N2<1$, we have
\begin{align*}
    \la^{\frac2{p-1}-N}\int_0^{\la^2t}\int v^p(x,s)\,{\rm d}x{\rm d}s&\le CL^p\la^{(N-\frac2{p-1})(p-1)}\big[1+{(\la^2t)}^{1-\frac N2(p-1)}\big]\\
    &=CL^p\big[\la^{-2\gamma}+t^\gamma\big],
\end{align*}
where $\gamma=1-\frac N2(p-1)>0$.
\end{proof}
As a corollary we get the divergence of the mass of $U$ as $t\to0^+$.
\begin{coro}
    Let $U=\lim\limits_{j\to\infty}u_{\la_j}$ uniformly on compact sets. Then,
    \[
        \lim_{t\to0^+}\int U(x,t)\,{\rm d}x=\infty.
    \]
\end{coro}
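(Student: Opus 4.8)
The plan is to turn the lower bound on the rescaled mass of the subsolution $v$ obtained in Proposition~\ref{prop-bound of integral} into a lower bound for $\int U(x,t)\,{\rm d}x$, and then to exploit the freedom in the parameter $L$. First I would record that $\int h\,{\rm d}x=1$ and that $N-\frac2{p-1}<0$, since $p<1+\frac2N$. Hence, for each fixed $L>0$, the initial datum $L\la^{N-\frac2{p-1}}h$ of $v$ lies below $u_0$ as soon as $\la$ is large enough, and the comparison principle (Corollary~\ref{coro-nonnegative} together with Proposition~\ref{prop-comparison}) gives $v\le u$. Consequently,
\[
    \la^{\frac2{p-1}-N}\int v(x,\la^2t)\,{\rm d}x\le\la^{\frac2{p-1}-N}\int u(x,\la^2t)\,{\rm d}x=\int\ul(x,t)\,{\rm d}x,
\]
and Proposition~\ref{prop-bound of integral} yields
\[
    \int\ul(x,t)\,{\rm d}x\ge L-C_0L^p\big[\la^{-2\gamma}+t^\gamma\big]
\]
for all $\la=\la_j$ large (depending on $L$).

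The delicate point is to pass to the limit $j\to\infty$ \emph{without losing this lower bound}: the convergence $u_{\la_j}\to U$ holds only uniformly on compact sets, and a direct application of Fatou's lemma would produce $\int U\le\liminf_j\int\ul$, the wrong inequality. To get around this I would use the scale-invariant spatial decay estimate $\ul(x,t)\le C_0|x|^{-\frac2{p-1}}$ of Corollary~\ref{coro-space decay}. Because $\frac2{p-1}>N$, this majorant is integrable at infinity \emph{uniformly in $\la$}:
\[
    \int_{|x|>R}\ul(x,t)\,{\rm d}x\le C_0\int_{|x|>R}|x|^{-\frac2{p-1}}\,{\rm d}x=CR^{N-\frac2{p-1}}\to0\quad\text{as }R\to\infty,
\]
uniformly in $\la$. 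Writing $\int_{B_R}\ul=\int\ul-\int_{|x|>R}\ul$ and inserting the previous lower bound gives
\[
    \int_{B_R}\ul(x,t)\,{\rm d}x\ge L-C_0L^p\big[\la^{-2\gamma}+t^\gamma\big]-CR^{N-\frac2{p-1}}.
\]
Now uniform convergence on the \emph{fixed} ball $B_R$ allows me to send $\la=\la_j\to\infty$, which annihilates the $\la^{-2\gamma}$ term, and then to let $R\to\infty$ using $\int U\ge\int_{B_R}U$ together with the monotone convergence of the truncated integrals. The outcome is
\[
    \int U(x,t)\,{\rm d}x\ge L-C_0L^pt^\gamma\qquad\text{for every }t>0\text{ and every }L>0.
\]

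Finally, for each fixed $L$ I would let $t\to0^+$ to obtain $\liminf_{t\to0^+}\int U(x,t)\,{\rm d}x\ge L$, and conclude by the arbitrariness of $L$. The only genuinely non-routine step is the interchange of the limit in $\la$ with the integral over all of $\R^N$ while preserving a \emph{lower} bound; its resolution hinges on the absorption-dominated condition $p<1+\frac2N$, which forces $\frac2{p-1}>N$ and thus makes the common spatial tail integrable, so that the mass escaping to infinity is uniformly negligible in $\la$. Everything else is routine bookkeeping.
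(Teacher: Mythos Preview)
Your proposal is correct and follows essentially the same route as the paper: bound $\int u_\lambda$ from below via Proposition~\ref{prop-bound of integral}, use the scale-invariant decay $u_\lambda\le C_0|x|^{-\frac2{p-1}}$ together with $\frac2{p-1}>N$ to control the tail uniformly in $\lambda$, pass to the limit on the compact ball $B_R$, then let $R\to\infty$, $t\to0^+$, and finally $L\to\infty$. Your write-up is in fact more explicit than the paper's about why the limit interchange works.
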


\begin{proof}
Let $v$ be the solution to \eqref{eq-subsolution to bound integral}. By Proposition~\ref{prop-bound of integral},
\[
    \int u_{\la_j}(x,t)\,{\rm d}x\ge\int v_{\la_j}\,{\rm d}x\ge L-CL^p\big[\la_j^{-2\gamma}+t^\gamma\big].
\]
Therefore, since $u_{\la_j}(x,t)\le C|x|^{-\frac2{p-1}}$, then
\[
    \int_{|x|<R}\ul(x,t)\,{\rm d}x\ge \int \ul(x,t)\,{\rm d}x-C R^{N-\frac2{p-1}}\ge L-C\big[L^pt^\gamma+R^{N-\frac2{p-1}}\big].
\]
Since $p<1+\frac 2N$, letting first $j\to\infty$ and then $R\to\infty$ we obtain
\[
    \int_{\R^n}U(x,t)\,{\rm d}x \ge L-CL^pt^\gamma,
\]
whence
\[
    \liminf_{t\to0^+}\int_{\R^n}U(x,t)\,{\rm d}x \ge L.
\]	
As $L$ is arbitrary, the corollary is proved.
\end{proof}

This result allows us to identify $U$ as the unique  nonnegative very singular solution $V$ to the semilinear local heat equation~\eqref{eq:local.heat.absorption}.   Therefore, convergence is not restricted to a subsequence.
\begin{coro}
    The whole family $\{\ul\}$ converges uniformly on compact sets to $ V$.
\end{coro}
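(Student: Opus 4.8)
The plan is to combine the precompactness of Proposition~\ref{prop-compactness} with the very singular behaviour of every subsequential limit, and then to invoke the uniqueness of the very singular solution quoted in the Remark above. First I would observe that all the ingredients needed to pin down the limit have already been assembled: by Proposition~\ref{prop-compactness} every subsequential limit $U$ is a nonnegative solution of the semilinear local heat equation~\eqref{eq:local.heat.absorption}; by Proposition~\ref{prop-U to 0} it satisfies $U(x,t)\to0$ as $t\to0^+$ for every $x\neq0$; and by the preceding corollary $\int U(x,t)\,{\rm d}x\to\infty$ as $t\to0^+$. Consequently $U$ fulfils both conditions in~\eqref{eq:conditions.VSS}, that is, $U$ is a nonnegative very singular solution of~\eqref{eq:local.heat.absorption}. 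The essential point is that these three properties hold for \emph{any} subsequential limit, so the characterization does not depend on the chosen subsequence.

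The next step is to apply the uniqueness statement recorded in the Remark: there is exactly one nonnegative very singular solution $V$ of~\eqref{eq:local.heat.absorption}, without any \emph{a priori} radial symmetry or self-similarity assumed. Applying this to the solution $U$ produced above yields $U=V$. This is precisely where the unrestricted uniqueness is indispensable, since the compactness argument delivers a limit $U$ that is not known \emph{a priori} to be either radial or self-similar, and the earlier uniqueness result valid only in the restricted class would not suffice.

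Finally I would upgrade subsequential convergence to convergence of the whole family by a standard topological argument. Uniform convergence on compact subsets of $\R^N\times(0,\infty)$ is induced by a metric, through an exhaustion by compacta, so it is enough to show that every sequence $\lambda_k\to\infty$ admits a subsequence of $\{u_{\lambda_k}\}$ converging to $V$. Given such a sequence, Proposition~\ref{prop-compactness} furnishes a further subsequence converging uniformly on compact sets to some limit $U$, and by the first two paragraphs $U=V$. Since the limit along any convergent subsequence is forced to be $V$, the whole family $\{\ul\}$ converges to $V$ uniformly on compact sets as $\lambda\to\infty$.

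I expect no genuine obstacle at this stage: the substantive analytic work has already been carried out in Proposition~\ref{prop-U to 0} and in the mass-divergence corollary, and the argument of the last paragraph is purely formal. The only delicate external ingredient is the uniqueness of $V$ in the full (non-self-similar) class, which the paper borrows from~\cite{K2} via the Remark; everything else reduces to reading off the consequences of the results already established.
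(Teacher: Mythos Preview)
Your proposal is correct and matches the paper's approach exactly: the paper simply observes that the preceding results identify every subsequential limit $U$ as the unique nonnegative very singular solution $V$, and concludes in one sentence that ``convergence is not restricted to a subsequence''. Your write-up spells out the standard subsequence argument more explicitly than the paper does, but the content is the same.
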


Now, we turn to the case $A>0$.  We know that all limit functions $U$ are solutions to the semilinear local heat equation with absorption~\eqref{eq:local.heat.absorption}. Hence, to identify the limit it is enough to show that they converge towards $A{|x|^{-2/(p-1)}}$ for all $x\neq0$ as $t\to0^+$. 
\begin{prop}\label{prop-U with A positive}	
    Let $u_0$ satisfy \eqref{eq-condition at infinity} with $A>0$. Let $U=\lim_{j\to\infty}u_{\la_j}$ uniformly in compact sets, with $\la_j\to\infty$. Then, for every $x\neq0$,
    \[
        U(x,t)\to A{|x|^{-2/(p-1)}}\quad\mbox{as }t\to0^+.
    \]
\end{prop}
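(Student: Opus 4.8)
The plan is to fix an arbitrary $x_0\neq0$ and trap $\limsup_{t\to0^+}U(x_0,t)$ and $\liminf_{t\to0^+}U(x_0,t)$ between two quantities both converging to $A|x_0|^{-\frac2{p-1}}$, by comparing $u_\la$ on a small ball against suitable nonlocal barriers and passing to the local limit. The crucial preliminary observation is that the scaled initial data converge \emph{uniformly} on $\bar B:=\bar B_a(x_0)$, for any fixed $a<|x_0|/2$, to the profile $A|x|^{-\frac2{p-1}}$: for $x\in\bar B$ one has $|\la x|\ge\la|x_0|/2\to\infty$, so \eqref{eq-condition at infinity} gives $u_\la(x,0)\,|x|^{\frac2{p-1}}=|\la x|^{\frac2{p-1}}u_0(\la x)\to A$ uniformly on $\bar B$. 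This lets me squeeze $u_\la(\cdot,0)$ between $A|x|^{-\frac2{p-1}}\pm\ep$ on $\bar B$ for all large $\la$.

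For the \emph{upper bound} I would repeat the barrier argument of Proposition~\ref{prop-U to 0}, only changing the interior value of the comparison datum. Given $\ep>0$, choose $\psi\in C^{2,\alpha}(\bar B)$ with $\psi=A|x|^{-\frac2{p-1}}+\ep$ on $B_{a/4}(x_0)$, $\psi\equiv M$ on $B\setminus B_{a/2}(x_0)$, and $\psi\ge A|x|^{-\frac2{p-1}}+\tfrac\ep2$ throughout, where $M$ is chosen (using $u_\la\le C_0|x|^{-\frac2{p-1}}$ from Corollary~\ref{coro-space decay}) so that $M\ge u_\la$ on a neighborhood of $\partial B$ for every large $\la$. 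By the uniform convergence above, $u_\la(\cdot,0)\le\psi$ on $B$ for $\la$ large. Letting $v_\la$ solve the \emph{homogeneous} scaled equation $v_t-\L_\la v=0$ on $B$ with Dirichlet value $M$ outside $B$ and datum $\psi$, this $v_\la$ is a supersolution of the scaled absorption problem, so $u_\la\le v_\la$ on $B\times(0,\infty)$ by the localized comparison of Remark~\ref{rem-comparison}. Since $v_\la\to V$ as $\la\to\infty$, the solution of $V_t-\a\Delta V=0$ on $B$ with the same data, by \cite{CER}, we obtain $U\le V$ and hence $\limsup_{t\to0^+}U(x_0,t)\le V(x_0,0^+)=\psi(x_0)=A|x_0|^{-\frac2{p-1}}+\ep$. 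As $\ep$ is arbitrary, $\limsup_{t\to0^+}U(x_0,t)\le A|x_0|^{-\frac2{p-1}}$.

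The \emph{lower bound} is the genuinely new ingredient and needs a subsolution, so the absorption term must be respected. Given $\ep>0$, take $\underline\psi\in C^{2,\alpha}(\bar B)$ with $0\le\underline\psi\le A|x|^{-\frac2{p-1}}$, $\underline\psi=A|x|^{-\frac2{p-1}}-\ep$ on $B_{a/4}(x_0)$ and $\underline\psi=0$ on $B\setminus B_{a/2}(x_0)$; by uniform convergence $\underline\psi\le u_\la(\cdot,0)$ on $B$ for $\la$ large. Let $\tilde w_\la$ solve the homogeneous scaled equation on $B$ with datum $\underline\psi$ and \emph{zero} value outside $B$, set $C:=\max_{\bar B}\underline\psi$ and $w_\la:=e^{-C^{p-1}t}\tilde w_\la$. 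Then $0\le w_\la\le\tilde w_\la\le C$ by the maximum principle, and a direct computation gives $\partial_t w_\la-\L_\la w_\la=-C^{p-1}w_\la\le-w_\la^p$, so $w_\la$ is a subsolution of the scaled absorption problem; it vanishes outside $B$ (hence lies below $u_\la\ge0$ there) and $w_\la(\cdot,0)=\underline\psi\le u_\la(\cdot,0)$, so Remark~\ref{rem-comparison} yields $w_\la\le u_\la$ on $B$. Passing to the limit with \cite{CER}, $\tilde w_\la\to\tilde W$, the solution of $\tilde W_t-\a\Delta\tilde W=0$ on $B$ with datum $\underline\psi$ and zero boundary value; therefore $U\ge e^{-C^{p-1}t}\tilde W$ and $\liminf_{t\to0^+}U(x_0,t)\ge\tilde W(x_0,0^+)=\underline\psi(x_0)=A|x_0|^{-\frac2{p-1}}-\ep$. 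Letting $\ep\to0$ and combining with the upper bound finishes the proof.

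I expect the main obstacle to be exactly this lower bound: producing a subsolution whose $\la\to\infty$ limit is explicitly identifiable. The homogeneous barriers of \cite{CER} are only subsolutions of the absorption equation after inserting the decaying factor $e^{-C^{p-1}t}$, which is harmless as $t\to0^+$ and restores the correct initial value $\underline\psi(x_0)$. A secondary point to verify is that the collar on which the prescribed value must dominate shrinks like $d/\la$ in the scaled problem, so the crude choices ($w_\la=0$ from below and the constant $M$ from above) indeed suffice uniformly for all large $\la$.
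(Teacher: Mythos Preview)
Your argument is correct, and the upper bound is essentially identical to the paper's. The genuine difference is in the lower bound. The paper takes $w_\la$ to be the solution of the \emph{nonlocal problem with absorption} on $B$ (zero outside, datum $\widehat\psi$), and then proves a separate lemma---using the comparison principle of Proposition~\ref{prop-comparison 2} and $C^{2,\alpha}$ extension/approximation as in \cite{CER}---to show that $w_\la\to W$, where $W$ solves $W_t-\a\Delta W=-W^p$ on $B$ with zero boundary and datum $\widehat\psi$. You bypass this lemma entirely: you solve the \emph{homogeneous} scaled problem (for which \cite{CER} already gives convergence), and then turn it into a subsolution of the absorption equation by the exponential factor $e^{-C^{p-1}t}$, exploiting only the crude maximum bound $0\le\tilde w_\la\le C$. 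Since the factor equals $1$ at $t=0$, the initial trace is preserved and the $\liminf$ computation goes through.

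What each approach buys: the paper's lemma is slightly more general (it identifies the limit of the nonlinear local problem on the ball, not just a lower barrier), and could be reused elsewhere; your route is shorter and needs no new convergence result beyond \cite{CER}. One cosmetic point: to guarantee $\underline\psi\le u_\la(\cdot,0)$ on all of $B$ for large $\la$, arrange the interpolation so that $\underline\psi\le (A-\ep')|x|^{-\frac2{p-1}}$ on $\bar B$ for some fixed $\ep'\in(0,\ep)$; then the uniform convergence $|\la x|^{\frac2{p-1}}u_0(\la x)\to A$ on $\bar B$ gives the desired ordering.
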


\begin{proof}
The proof follows the lines of that of Proposition \ref{prop-U to 0}. As there, let  $x_0\neq0$, $M\ge 4^{3/(p-1)}|x_0|^{-2/(p-1)}\max\{A+\ep,C_0\}$, where $C_0$ is such that $u(x,t)\le C_0|x|^{-2/(p-1)}$ and $\ep>0$ is fixed but arbitrary. Then, if $\la$ is large there holds that $\ul\le M$ if $a-2d\la^{-1}<|x|$ and $a=\frac{|x_0|}4$. Now, let $\psi\in C^{2,\alpha}(\overline B)$ with $B=B_a(x_0)$ be such that $\psi=M$ close to $\partial B$, $\psi=(A+\ep)|x|^{-\frac2{p-1}}$ in $B_{a/2}(x_0)$ and let $v_\la$ be the solution to \eqref{eq-barrier in B}. Since $|x|^{\frac2{p-1}}u_0(x)\to A$ as $|x|\to\infty$,  then
\[
    (A-\ep){|x|^{-\frac2{p-1}}}\le \ul(x,0)\le (A+\ep){|x|^{-\frac2{p-1}}}\quad\mbox{in }B
\]
if $\la$ is large. Hence, by the choice of $M$, $\ul\le v_{\la}$ if $\la$ is large. As in Proposition~\ref{eq-barrier in B}, we know that $v_{\la}\to V$ uniformly in $\bar B\times[0,1]$ where $V$ is the solution to \eqref{eq-V}. Hence, $U\le V$ so that
\[
    \limsup_{t\to0^+}U(x_0,t)\le (A+\ep)|x_0|^{-\frac2{p-1}}.
\]
	
Now we prove that
\[
    \liminf_{t\to0^+}U(x_0,t)\ge (A-\ep)|x_0|^{-\frac2{p-1}}.
\]
To this end, we construct a subsolution. So, this time we let $w_\la$ be the solution to
\begin{equation*}\label{eq-subsolution w}
    \begin{cases}
		w_t-\L_\la w=-w^p&\mbox{in }B\times(0,1),\\
		w=0&\mbox{in }(\R^N\setminus B)\times(0,1),\\
		w(x,0)=\widehat\psi(x)&\mbox{in }B,
	\end{cases}
\end{equation*}
where $\widehat\psi\in C^{2,\alpha}(\overline B)$ with $B=B_a(x_0)$ is such that $\widehat\psi=0$ close to $\partial B$ and $\widehat\psi=(A-\ep)|x|^{-\frac2{p-1}}$ in $B_{a/2}(x_0)$.
	
In the next lemma, with arguments similar to those in~\cite{CER}, we prove that $w_\la\to W$ as $\la\to\infty$ where $W$ is the unique solution to
\begin{equation*}\label{eq-W}
	\begin{cases}
		W_t-\a\Delta W=-W^p&\mbox{in }B\times(0,1),\\
		W=0&\mbox{in }\partial B\times(0,1),\\
		W=\widehat\psi&\mbox{in }B.
	\end{cases}
\end{equation*}
As $\ul\ge w_\la$ in $B\times(0,1)$ if $\la$ is large, there holds that $U\ge W$ and hence,
\[
    \liminf_{t\to0^+} U(x_0,t)\ge (A-\ep)|x_0|^{-2/(p-1)}.
\]
As $\ep>0$ is arbitrary, the proposition is proved.	
\end{proof}

\begin{lema}
    Let $w_\la$ and $W$ as in the proof of Proposition~\ref{prop-U with A positive}. Then, $w_\la\to W$ as $\la\to\infty$ in $B_a(x_0)\times(0,1)$.
\end{lema}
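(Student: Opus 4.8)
The plan is to reduce the semilinear convergence to the \emph{linear} convergence result of \cite{CER}, treating the absorption term as a uniformly Lipschitz, sign-favorable perturbation handled by a Duhamel representation and a Gronwall estimate.

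First I would record uniform bounds. Since $\widehat\psi\ge0$ and $-w^p\le0$, the comparison principle (Proposition~\ref{prop-comparison}, in the localized form of Remark~\ref{rem-comparison}) gives $0\le w_\la\le M_0:=\|\widehat\psi\|_{L^\infty(B)}$ and likewise $0\le W\le M_0$, uniformly in $\la$. Consequently $s\mapsto s^p$ is Lipschitz on $[0,M_0]$ with a $\la$-independent constant $L_0=pM_0^{p-1}$. Let $S_\la(t)$ be the linear Dirichlet semigroup in $B$ generated by $\L_\la$ with exterior datum $0$ (for fixed $\la$ the operator $\L_\la$ is bounded, so this is elementary), and let $T(t)=e^{t\a\Delta}$ be the local Dirichlet heat semigroup in $B$; both are order preserving and contractive in $L^1(B)$ and $L^\infty(B)$. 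The linear content of \cite{CER}, used already in the convergences $v_\la\to V$ above, is precisely that $S_\la(\tau)g\to T(\tau)g$ in $L^1(B)$ as $\la\to\infty$, for each fixed $\tau>0$ and each fixed $g\in L^\infty(B)$.

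The functions $w_\la$ and $W$ satisfy the variation-of-constants identities
\begin{equation*}
w_\la(t)=S_\la(t)\widehat\psi-\int_0^tS_\la(t-s)\,w_\la(s)^p\,{\rm d}s,\qquad W(t)=T(t)\widehat\psi-\int_0^tT(t-s)\,W(s)^p\,{\rm d}s.
\end{equation*}
Subtracting and writing $S_\la(t-s)w_\la^p-T(t-s)W^p=S_\la(t-s)(w_\la^p-W^p)+(S_\la(t-s)-T(t-s))W^p$, the $L^1(B)$-contractivity of $S_\la$ together with the Lipschitz bound give
\begin{equation*}
\|w_\la(t)-W(t)\|_{L^1(B)}\le \eta_\la(t)+L_0\int_0^t\|w_\la(s)-W(s)\|_{L^1(B)}\,{\rm d}s,
\end{equation*}
where
\begin{equation*}
\eta_\la(t)=\|S_\la(t)\widehat\psi-T(t)\widehat\psi\|_{L^1(B)}+\int_0^t\|(S_\la(t-s)-T(t-s))W(s)^p\|_{L^1(B)}\,{\rm d}s.
\end{equation*}
Gronwall's lemma then yields $\|w_\la(t)-W(t)\|_{L^1(B)}\le \eta_\la(t)\,e^{L_0t}$, so it suffices to prove $\eta_\la(t)\to0$. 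The first summand tends to $0$ by \cite{CER}. For the integral, note that $W(s)^p$ is a fixed function (independent of $\la$): for each fixed $s\in(0,t)$ the integrand tends to $0$ by \cite{CER} applied to $g=W(s)^p\in L^\infty(B)$, while it is dominated by $2\|W(s)^p\|_{L^1(B)}\le 2M_0^p|B|\in L^1(0,t)$, so dominated convergence gives $\eta_\la(t)\to0$. This establishes $w_\la\to W$ in $L^1(B)$, locally uniformly in $t\in(0,1]$. To upgrade to the uniform convergence on compact subsets of $B\times(0,1]$ stated in the lemma, I would invoke the compactness of $\{w_\la\}$ obtained exactly as in Proposition~\ref{prop-compactness} (the estimates of \cite{terra-wol} are insensitive to the bounded, uniformly Lipschitz term $-w_\la^p$): every subsequence has a locally uniformly convergent sub-subsequence, whose limit must agree a.e.\ with the $L^1$-limit $W$ and hence equals it, so the whole family converges.

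The \emph{main obstacle} is entirely absorbed into the linear input $S_\la(\tau)g\to T(\tau)g$, namely the collapse of the nonlocal Dirichlet boundary layer onto the local homogeneous Dirichlet condition; this is what \cite{CER} supplies, and the favorable sign of $-w^p$ ensures that the barrier constructions there transfer without change. If a self-contained argument is preferred, the alternative is to re-run the \cite{CER} scheme directly on the semilinear problem: establish compactness as above, pass to the limit in the weak formulation using the weak convergence $\L_\la\to\a\Delta$ together with the uniform convergence $w_\la^p\to\overline W^p$ to identify both the interior equation and the initial datum $\widehat\psi$, and use the same barriers to force $\overline W=0$ on $\partial B$; uniqueness for the local semilinear Dirichlet problem then closes the argument.
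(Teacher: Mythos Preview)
Your argument is correct but follows a genuinely different route from the paper's.

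The paper proves the lemma by a direct barrier/comparison argument: it extends $W$ to $\bar W\in C^{2,\alpha}(\R^N\times[0,1])$ and sets $Z=w_\la-\bar W-\widetilde C\la^{-1}-\bar C\la^{-\alpha}t$, $F=w_\la^p-\bar W^p$. The consistency estimate $|\L_\la\bar W-\a\Delta\bar W|\le\bar C\la^{-\alpha}$ (Taylor expansion on a $C^{2,\alpha}$ function), the boundary bound $|\bar W|\le\widetilde C\la^{-1}$ in the layer $B_{a+d\la^{-1}}(x_0)\setminus B_a(x_0)$, and the monotonicity of $s\mapsto s^p$ (which makes $F\operatorname{sgn}(Z)\ge0$) place $Z$ under the hypotheses of Proposition~\ref{prop-comparison 2}; hence $Z\le0$, i.e.\ $w_\la\le W+O(\la^{-1}+\la^{-\alpha})$ uniformly on $B\times[0,1]$. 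The reverse inequality is symmetric. This is self-contained, gives uniform convergence with an explicit rate, and uses only the comparison machinery already in the paper --- indeed Proposition~\ref{prop-comparison 2} was stated precisely for this step.

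Your Duhamel--Gronwall reduction to the linear result of \cite{CER} is more modular: it would work for any bounded Lipschitz nonlinearity, not just a monotone one, whereas the paper's argument leans on $F\operatorname{sgn}(Z)\ge0$. The cost is that you lose the explicit rate and rely on \cite{CER} as a black box. Two small points deserve care. First, \cite{CER} needs the local solution to be $C^{2,\alpha}$ up to the boundary; for $1<p<2$ the datum $W(s)^p$ may fail this near $\partial B$ where $W$ vanishes linearly, but the $L^1$-contractivity of both semigroups lets you fix this by a density argument. Second, your upgrade from $L^1$ to uniform convergence by invoking ``compactness exactly as in Proposition~\ref{prop-compactness}'' is loose: that proposition and the \cite{terra-wol} estimates are set up for the full-space problem, not the Dirichlet problem in $B$. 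This gap is harmless here, since the $L^1$ convergence you obtain already suffices for the only downstream use of the lemma (the pointwise inequality $U\ge W$ in Proposition~\ref{prop-U with A positive}).
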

\begin{proof}
Let $Z=w_\la-\bar W-\widetilde C\la^{-1}-\bar C\la^{-\alpha}t$ and $F=w_\la^p-\bar W^p$, where $\bar W$ is an extension of $W$ as a $C^{2,\alpha}(\R^N\times[0,1])$ function. Here, $\widetilde C$ is such that $|\bar W|\le\widetilde C\la^{-1}$ in $(B_{a+d\la^{-1}}(x_0)\setminus B_a(x_0))\times [0,1])$ and $\bar C$ is such that $| \a\Delta \bar W-\L_\la \bar W|\le \bar C\la^{-\alpha}$ in $\bar B\times[0,1]$. Then,
\[
    Z_t-\L_\la Z=(\L_\la\bar W-\a\Delta \bar W)-\bar C\la^{-\alpha}-w^p_\la+\bar W^p\le -F(x,t)\quad\mbox{in }B_a(x_0)\times(0,1).
\]
	
Moreover, $Z=-\bar W-\widetilde C\la^{-1}-\bar C\la^2t\le0$ in $(B_{a+d\la^{-1}}(x_0)\setminus B_a(x_0))\times(0,1)$, and $Z(x,0)=-\widetilde C\la^{-1}\le 0$ in $B_a(x_0)$. Since $Z>0$ implies that $w_\la>\bar W$, $Z$ is under the assumptions of Proposition~\ref{prop-comparison 2}. Hence, $Z\le0$, so that
\[
    \limsup_{\la\to\infty}(w_\la-W)\le0\quad\mbox{in }B_a(x_0)\times[0,1].
\]
	
Analogously,  $Z=\bar W-w_\la-\widetilde C\la^{-1}-\bar C\la^{-\alpha}t$, $F=\bar W^p-w_\la^p$ are under the assumptions of Proposition~\ref{prop-comparison 2}, so that
\[
    \limsup_{\la\to\infty}(W-w_\la)\le0\quad\mbox{in }B_a(x_0)\times[0,1]. \qedhere
\]
\end{proof}

This identifies the limit function $U$ as the unique nonnegative solution  $U_A$ to the semilinear local heat equation~\eqref{eq:local.heat.absorption}   such that $U(x,t)\to A|x|^{-2/(p-1)}$ as $t\to0^+$ if $x\neq0$. Hence, convergence is not restricted to a subsequence.
\begin{coro}
    The whole family $\{\ul\}$ converges uniformly on compact sets to $U_A$.
\end{coro}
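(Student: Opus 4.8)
The plan is to deduce convergence of the entire family from three ingredients already assembled: the precompactness of $\{\ul\}$, the identification of the initial trace of any subsequential limit, and the uniqueness of the solution of the limiting problem carrying that trace. The corollary is then a clean wrap-up, and I would prove it by the standard ``every subsequence has a sub-subsequence converging to the same limit'' principle.

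First I would fix an arbitrary sequence $\la_j\to\infty$ and apply Proposition~\ref{prop-compactness} to extract a subsequence (not relabeled) along which $u_{\la_j}$ converges, uniformly on compact subsets of $\R^N\times(0,\infty)$, to some nonnegative function $U$ solving the semilinear local heat equation~\eqref{eq:local.heat.absorption}. By Proposition~\ref{prop-U with A positive}, this limit satisfies $U(x,t)\to A|x|^{-\frac2{p-1}}$ as $t\to0^+$ for every $x\neq0$; that is, $U$ is a nonnegative solution of~\eqref{eq:local.heat.absorption} realizing the initial trace~\eqref{eq:initial.datum.profile.critical.decay}.

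Next I would invoke the uniqueness of such a solution, recorded in the Remark and established in~\cite{K1} (see also~\cite{BPT}): there is exactly one nonnegative solution of~\eqref{eq:local.heat.absorption} with initial trace $A|x|^{-\frac2{p-1}}$, namely $U_A$. Consequently every subsequential limit coincides with $U_A$, irrespective of the chosen sequence. I would then upgrade this to convergence of the full family by contradiction: were $\{\ul\}$ not to converge to $U_A$ uniformly on some compact $K\subset\R^N\times(0,\infty)$, there would exist $\delta>0$ and $\la_j\to\infty$ with $\|u_{\la_j}-U_A\|_{L^\infty(K)}\ge\delta$; but precompactness together with the previous steps forces a further subsequence to converge to $U_A$ on $K$, contradicting this lower bound. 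Hence $\ul\to U_A$ uniformly on compact subsets of $\R^N\times(0,\infty)$.

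The substantive difficulties lie entirely in the preceding results rather than in the corollary: producing the trace of the limit without assuming \emph{a priori} self-similarity or radial symmetry, and the uniqueness of $U_A$ among all nonnegative solutions rather than merely within a restricted class. For the corollary itself the single essential input is that uniqueness, since it is precisely what prevents distinct subsequences from selecting distinct limits; once it is in hand, the passage to full-family convergence is routine.
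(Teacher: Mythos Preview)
Your argument is correct and is exactly the approach the paper takes: the paper's justification is the single sentence preceding the corollary, observing that Proposition~\ref{prop-U with A positive} together with the uniqueness of $U_A$ forces every subsequential limit to be $U_A$, so convergence is not restricted to a subsequence. You have simply spelled out the standard sub-subsequence argument that the paper leaves implicit.
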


\subsection{Asymptotic results}

Finally, using the selfsimilarity of the limit functions $V$ and~$U_A$, which makes them invariant under the scaling, we can translate the convergence of the family $\{u_\lambda\}$ in compact sets into large-time asymptotic results for the solution to~\eqref{eq-problem without sign}. We begin with the case $A=0$. 
\begin{teo}\label{teo-main}
    Let $p\in(1,1+\frac2N)$ and $u_0\ge0$, $u_0\in L^\infty(\R^N)$, such that $|x|^{\frac2{p-1}}u_0(x)\to0$ as $|x|\to\infty$. Let $V$ be the unique very singular solution  to~\eqref{eq:local.heat.absorption}   and $u$ the unique solution to \eqref{eq-problem without sign}. Then, for every $K>0$,
    \begin{equation}\label{eq:main.thm}
        t^{\frac1{p-1}}\|u(\cdot,t)-{V}(\cdot,t)\|_{L^\infty(B_{K\sqrt t})}\to0\quad\mbox{as }t\to\infty.
    \end{equation}
\end{teo}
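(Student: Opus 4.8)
The plan is to deduce the large-time statement \eqref{eq:main.thm} directly from the convergence of the rescaled family, established in the corollary just proved, by exploiting the scaling invariance of the very singular solution $V$. No new analysis is needed at this stage: all the difficulty has already been absorbed into the compactness result (Proposition~\ref{prop-compactness}) and, above all, into the identification of every possible limit as $V$. What remains is purely a change of variables.

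First I would record the self-similar structure of $V$. Since $V$ is the unique nonnegative very singular solution to~\eqref{eq:local.heat.absorption}, it has the form $V(x,t)=t^{-\frac1{p-1}}f(x/\sqrt t)$ for a radial profile $f$; equivalently, $V$ is \emph{invariant} under the absorption scaling~\eqref{eq:asorption.scaling}, that is, $\la^{\frac2{p-1}}V(\la x,\la^2 t)=V(x,t)$ for every $\la>0$. This is precisely the feature that produces the sharp decay rate $t^{-\frac1{p-1}}$ together with the diffusive spatial window $B_{K\sqrt t}$. The self-similarity of the very singular solution is a classical fact and may simply be cited.

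Next I would invoke the corollary: the \emph{whole} family $\{\ul\}$ converges to $V$ uniformly on compact subsets of $\R^N\times(0,\infty)$ as $\la\to\infty$. Specializing this to the compact slice $\overline{B_K}\times\{1\}$ gives
\[
    \sup_{|x|\le K}|\ul(x,1)-V(x,1)|\to0\quad\text{as }\la\to\infty.
\]
Unwinding the definitions, $\ul(x,1)=\la^{\frac2{p-1}}u(\la x,\la^2)$ by~\eqref{eq:asorption.scaling}, while $V(x,1)=\la^{\frac2{p-1}}V(\la x,\la^2)$ by the scaling invariance of $V$, so the display above is
\[
    \la^{\frac2{p-1}}\sup_{|x|\le K}|u(\la x,\la^2)-V(\la x,\la^2)|\to0\quad\text{as }\la\to\infty.
\]

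Finally, I would set $\la=\sqrt t$ and $y=\la x$, so that $|x|\le K$ becomes $|y|\le K\sqrt t$ and $\la^{\frac2{p-1}}=t^{\frac1{p-1}}$; as $t\to\infty$ we have $\la\to\infty$, and the last display is exactly~\eqref{eq:main.thm}. I do not expect a genuine obstacle in this final step. The only two points requiring a line of care are the self-similarity of $V$, already noted above, and the fact that the passage to the limit is along the \emph{continuous} parameter $t\to\infty$ rather than along a subsequence $\la_j$ — this is legitimate precisely because the earlier corollary upgraded the convergence of $\{\ul\}$ from subsequences to the whole family.
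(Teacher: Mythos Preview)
Your proposal is correct and follows essentially the same route as the paper: both invoke the self-similar form of $V$ (hence its invariance under the scaling~\eqref{eq:asorption.scaling}), specialize the uniform convergence of $\{u_\lambda\}$ to $V$ on the compact slice $\overline{B_K}\times\{1\}$, and then substitute $\lambda=\sqrt t$ to rewrite this as~\eqref{eq:main.thm}. Your remark that the passage to the continuous parameter $t\to\infty$ is justified because the corollary upgrades convergence from subsequences to the full family is a useful clarification.
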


\begin{proof}
Since the very singular solution $V$ to~\eqref{eq:local.heat.absorption} has the selfsimilar form $V(x,t)=t^{-\frac1{p-1}}F(|x|t^{-\frac12})$, see~\cite{BPT}, it is invariant under the scaling, $V_\la(x,t)=V(x,t)$. Hence, if $x=\lambda y$ and $t=\la^2$, then
\begin{align*}
    t^{\frac1{p-1}}\big(u(x,t)-V(x,t)\big)&=\lambda^{\frac2{p-1}}\big(u(\lambda y,\lambda^2)-V(\lambda y,\lambda^2)\big)= u_\la(y,1)-V_\la(y,1)\\
    &=u_\la(y,1)-V(y,1).
\end{align*}
The uniform convergence
\[
    \lim_{\lambda\to\infty}\|\ul(\cdot,1)-V(y,1)\|_{L^\infty(B_K)}
\]
yields then~\eqref{eq:main.thm}, since $|x|= |y|\lambda =|y|\sqrt{t}\le K\sqrt{t}$ if $x=\lambda y$, $t=\la^2$, and $|y|<K$.
\end{proof}
The proof for the case $A>0$ is similar.
\begin{teo}\label{teo-main2}
    Let $p\in(1,1+\frac2N)$ and $u_0\ge0$, $u_0\in L^\infty(\R^N)$, such that $|x|^{\frac2{p-1}}u_0(x)\to A>0$ as $|x|\to\infty$.  Let $U_A$ be the unique solution of the semilinear local heat equation~\eqref{eq:local.heat.absorption}   such that $U(x,t)\to A|x|^{-\frac2{p-1}}$ as $t\to0^+$ if $x\neq0$, and $u$ the unique solution to \eqref{eq-problem without sign}. Then, for every $R>0$,
    \[
        t^{\frac1{p-1}}\|u(x,t)-{U_A}(x,t)\|_{L^\infty(B_{R\sqrt t})}\to0\quad\mbox{as }t\to\infty.
    \]
\end{teo}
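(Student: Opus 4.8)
The plan is to proceed exactly as in the proof of Theorem~\ref{teo-main}, the only change being that the self-similar profile has a different exponent reflecting the initial trace $A|x|^{-\frac2{p-1}}$. First I would recall, from the construction in~\cite{Herraiz-1999} for the local problem, that the solution $U_A$ inherits the self-similarity forced by its initial datum: since $A|x|^{-\frac2{p-1}}$ is homogeneous of degree $-\frac2{p-1}$ and the equation~\eqref{eq:local.heat.absorption} is invariant under $U\mapsto\la^{\frac2{p-1}}U(\la x,\la^2 t)$, the uniqueness already proved (the previous Corollary) forces $U_A$ to be a fixed point of this rescaling, i.e. $(U_A)_\la(x,t)=U_A(x,t)$ for all $\la>0$. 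Concretely this means $U_A(x,t)=t^{-\frac1{p-1}}G(|x|t^{-\frac12})$ for a profile $G$, exactly as $V$ was self-similar in Theorem~\ref{teo-main}.

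Granting this invariance, the rest is the same change of variables. Setting $x=\la y$, $t=\la^2$, I would write
\begin{align*}
    t^{\frac1{p-1}}\big(u(x,t)-U_A(x,t)\big)&=\la^{\frac2{p-1}}\big(u(\la y,\la^2)-U_A(\la y,\la^2)\big)\\
    &=\ul(y,1)-(U_A)_\la(y,1)=\ul(y,1)-U_A(y,1).
\end{align*}
The whole family $\{\ul\}$ converges uniformly on compact sets to $U_A$ by the Corollary preceding this theorem, so in particular
\[
    \lim_{\la\to\infty}\|\ul(\cdot,1)-U_A(\cdot,1)\|_{L^\infty(B_R)}=0.
\]
Since $x=\la y$, $t=\la^2$ and $|y|<R$ give $|x|=|y|\sqrt t\le R\sqrt t$, this uniform convergence is precisely the asserted statement, with $\la^2$ renamed as $t\to\infty$.

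The only genuine point requiring care—and the place I would concentrate the argument—is the scaling invariance $(U_A)_\la=U_A$. In Theorem~\ref{teo-main} this was quoted directly from~\cite{BPT} for the very singular solution; here I must instead deduce it from the uniqueness characterization of $U_A$ via its initial trace. The key observation is that $(U_A)_\la$ is again a nonnegative solution of~\eqref{eq:local.heat.absorption}, and its initial trace as $t\to0^+$ is $\la^{\frac2{p-1}}\,A|\la x|^{-\frac2{p-1}}=A|x|^{-\frac2{p-1}}$, unchanged; hence $(U_A)_\la$ satisfies~\eqref{eq:initial.datum.profile.critical.decay} and, by the uniqueness of such a solution, coincides with $U_A$. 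Everything else is the routine substitution above, so no further obstacle arises.
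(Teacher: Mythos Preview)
Your proposal is correct and follows exactly the approach of the paper: deduce the scaling invariance $(U_A)_\la=U_A$ from the uniqueness of the solution with initial trace $A|x|^{-\frac2{p-1}}$, and then repeat the change of variables from Theorem~\ref{teo-main}. The paper's proof is more terse (it just says uniqueness implies scaling invariance, hence the result), but your explicit verification that $(U_A)_\la$ has the same initial trace as $U_A$ is precisely the content behind that sentence.
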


\begin{proof}
As in the proof of Theorem \ref{teo-main}, the uniqueness of the solution $U_A$ to the heat equation with absorption such that $U(x,t)\to A|x|^{-\frac2{p-1}}$ as $t\to0^+$ if $x\neq0$ implies that $U_A$ is invariant under the scaling, hence the result.
\end{proof}

\section*{Acknowledgments}

This research was supported by the European Union's Horizon 2020 research and innovation programme under the Marie Sklodowska-Curie grant agreement No.\,777822, and by the ICMAT-Centro de excelencia “Severo Ochoa” project,  CEX2023-001347-S, funded by MCIN/AEI/10.13039/501100011033 (Spain).

\noindent Fernando Quir\'os received also financial support from grants PID2020-116949GB-I00, PID2023-146931NB-I00, RED2022-134784-T and RED2024-153842-T, all of them funded by MCIN/AEI/10.13039/501100011033 (Spain), and from the Madrid Government (Comunidad de Madrid – Spain) under the multiannual Agreement with UAM in the line for the Excellence of the University Research Staff in the context of the V PRICIT (Regional Programme of Research and Technological Innovation).

\noindent Noemí Wolanski was also financially supported by ANPCyT PICT2016-1022 (Argentina).



\begin{thebibliography}{XX}

\bibitem{Alfaro-Coville-2017} Alfaro, M.; Coville, J. \emph{Propagation phenomena in monostable integro-differential equations: acceleration or not?} J. Differential Equations 263 (2017), no.\,9, 5727--5758.

\bibitem{Rossi-book}
Andreu-Vaillo, F.; Mazón, J.\,M.; Rossi, J.\,D.; Toledo-Melero, J.\,J. \lq\lq Nonlocal diffusion problems''.
Mathematical Surveys and Monographs, 165. American Mathematical Society, Providence, RI; Real Sociedad Matem\'{a}tica Espa\~{n}ola, Madrid, 2010. ISBN: 978-0-8218-5230-9.

\bibitem{Bates-Chmaj-1999a} Bates, P.\,W.; Chmaj, A. \emph{A discrete convolution model for phase transitions}. Arch. Ration. Mech. Anal. 150 (1999), no.\,4, 281--305.

\bibitem{Bates-Chmaj-1999b} Bates, P.\,W.; Chmaj, A. \emph{An integrodifferential model for phase transitions: stationary solutions in higher space dimensions}. J. Statist. Phys. 95 (1999), no.\,5-6, 1119--1139.

\bibitem{Bates-Zhao-2007} Bates, P.\,W.; Zhao, G. \emph{Existence, uniqueness and stability of the stationary solution to a nonlocal evolution equation arising in population dispersal}. J. Math. Anal. Appl. 332 (2007), no.\,1, 428--440.

\bibitem{Brasseur-Coville-2023}  Brasseur, J.; Coville, J. \emph{Propagation phenomena with nonlocal diffusion in presence of an obstacle}. J. Dynam. Differential Equations 35 (2023), no.\,1, 237--301.

\bibitem{BPT} Brezis, H.; Peletier, L.\,A.; Terman, D. \emph{A very singular solution of the heat equation with absorption}.
Arch. Rational Mech. Anal. 95 (1986), no.\,3, 185--209.

\bibitem{Carrillo-Fife-2005} Carrillo, C.; Fife, P. \emph{Spatial effects in discrete generation population models}. J. Math. Biol. 50 (2005), no.\,2, 161--188.

\bibitem{Chasseigne-Chaves-Rossi-2006} Chasseigne, E.; Chaves, M.; Rossi, J.\,D. \emph{Asymptotic behavior for nonlocal diffusion equations}. J. Math. Pures Appl. (9) 86 (2006), no.\,3, 271--291.

\bibitem{CER} Cortazar, C.; Elgueta, M.; Rossi, J.\,D. \emph{Nonlocal diffusion problems that approximate the heat equation with Dirichlet boundary conditions}. Israel J. Math. 170 (2009), no.\,1, 53--60.


Discrete Contin. Dyn. Syst. 35 (2015), no.\,4, 1391--1407.
	

\bibitem{Coville-2021} Coville, J. \emph{Can a population survive in a shifting environment using non-local dispersion?} Nonlinear Anal. 212 (2021), Paper No.\,112416, 42 pp.

\bibitem{Fife-2003} Fife, P. \emph{Some nonclassical trends in parabolic and parabolic-like evolutions}. In \lq\lq Trends in nonlinear analysis'', 153--191, Springer, Berlin, 2003.
	
\bibitem{Gilboa-Osher-2008} Gilboa, G.; Osher, S. \emph{Nonlocal operators with applications to image processing}. Multiscale Model. Simul. 7 (2008), no.\,3, 1005--1028.


\bibitem{Herraiz-1999}  Herraiz, L. \emph{Asymptotic behaviour of solutions of some semilinear parabolic problems}. Ann. Inst. H. Poincaré C Anal. Non Linéaire 16 (1999), no.\,1, 49--105.
\bibitem{IR}  Ignat, L.\,I.; Rossi, J.\,D. \emph{Refined asymptotic expansions for nonlocal diffusion equations}. J. Evol. Equ. 8 (2008), no.\,4, 617--629.
		
	

\bibitem{K1} Kwak, M. \emph{A porous media equation with absorption. I. Long time behaviour}. J. Math. Anal. Appl. 223 (1998), no.\,1, 96--110.
	
\bibitem{K2} Kwak, M. \emph{A porous media equation with absorption. II. Uniqueness of the very singular solution}. J. Math. Anal. Appl. 223 (1998), no.\,1, 111--125.
	
\bibitem{PR} Pazoto, A.\,F.; Rossi, J.\,D. \emph{Asymptotic behaviour for a semilinear nonlocal equation}. Asymptot. Anal. 52 (2007), no.\,1-2, 143--155.

\bibitem{STW} Salort, A.; Terra, J.; Wolanski, N. \emph{Large time behavior for a nonlocal diffusion equation with absorption and bounded initial data: the subcritical case}. Asymptot. Anal. 95 (2015), no.\,1-2, 39--57.
	
\bibitem{terra-wol} Terra, J.; Wolanski, N. \emph{Large time behavior for a nonlocal diffusion equation with absorption and bounded initial data}. Discrete Contin. Dyn. Syst. 31 (2011), no.\,2, 581--605.

\bibitem{Terra-Wolanski-PAMS} Terra, J.; Wolanski, N. \emph{Asymptotic behavior for a nonlocal diffusion equation with absorption and nonintegrable initial data. The supercritical case}. Proc. Amer. Math. Soc. 139 (2011), no.\,4, 1421--1432.

\bibitem{Vazquez-Zuazua-2002} Vázquez, J.\,L.; Zuazua, E. \emph{Complexity of large time behaviour of evolution equations with bounded data. Dedicated to the memory of Jacques-Louis Lions}. Chinese Ann. Math. Ser. B 23 (2002), no.\,2, 293--310.

\end{thebibliography}
\end{document}